\documentclass[11pt,a4paper,leqno]{amsart}
\usepackage[T1]{fontenc}
\pdfoutput=1

\usepackage{anyfontsize}
\usepackage{amssymb,amsfonts,mathtools}   
\usepackage{graphicx}
\usepackage{placeins}
\usepackage{enumerate}
\usepackage{tikz}
\usepackage{setspace}
\usepackage{xcolor}
\usepackage{hyperref}
\usepackage{theoremref}

\usepackage[
  margin=1.5cm,
  includefoot,
  footskip=30pt,
]{geometry}

\usepackage[
backend=biber,
style=alphabetic,
maxbibnames=99, 
]{biblatex}
\newtheorem{thm}{Theorem}[section]
 
\newtheorem{lem}[thm]{Lemma} 
\newtheorem{prop}[thm]{Proposition}

\theoremstyle{definition}

\theoremstyle{remark}

\numberwithin{equation}{section}

\newcommand{\R}{\mathbb{R}}

\newcommand{\N}{\mathbb{N}}

\newcommand{\Sph}{\mathbb{S}}

\newcommand{\dist}{\operatorname{dist}}

\newcommand{\dd}{\mathop{}\!\mathrm d} 
\renewcommand{\H}{\mathcal H } 

\newcommand{\Per}{\operatorname{Per}}

\newcommand{\reach}{\operatorname{reach}}
\usepackage{newpxtext}
\usepackage[varbb]{newpxmath} 
\renewcommand\mathcal[1]{\text{\usefont{OMS}{cmsy}{m}{n}#1}}
\begin{document}

\title[The nonlocal tube formula]{On the Weyl tube formula for nonlocal perimeters}

\author[S. Dipierro]{Serena Dipierro}
\address{The University of Western Australia, 35 Stirling Highway, PERTH, WA, 6009, Australia}
\author[J. Thompson]{Jack Thompson}
\address{KTH Mathematics, Lindstedsv\"agen 25, 100 44 Stockholm, Sweden}
\author[E. Valdinoci]{Enrico Valdinoci}
\address{The University of Western Australia, 35 Stirling Highway, PERTH, WA, 6009, Australia}
\email{serena.dipierro@uwa.edu.au}
\email{enrico.valdinoci@uwa.edu.au}
\email{jackth@kth.se}

\subjclass[2020]{Primary 28A75, 35R11}

\date{\today}

\dedicatory{}

\begin{abstract}
 We study the asymptotic behaviour of the nonlocal perimeter of tubular neighbourhoods. For a closed $C^{1,1}$ submanifold of Euclidean space, we show that a suitable rescaling of the nonlocal perimeter of its tube converges to an explicit constant times the Hausdorff measure of the submanifold. The constant is the fractional perimeter of the unit ball in the codimension of the submanifold. For finite point sets, we obtain an exact tube formula in terms of a generalised hypergeometric function. In contrast with the classical Weyl formula, this expansion has infinitely many terms, contains both noninteger and integer powers of the tubular radius, and has coefficients that depend on the relative positions of the points rather than only on their intrinsic geometry. We also establish a point-set expansion for general analytic kernels.
\end{abstract}

\maketitle

\section{Introduction}

Weyl's tube formula relates the volume of a tubular neighbourhood of a compact submanifold to its geometry. In Euclidean space, the volume is a polynomial in the tubular radius, with coefficients determined by intrinsic curvature invariants of the submanifold \cite{weyl_volume_1939,gray_tubes_2004}. This formula is closely related to Steiner's formula for convex bodies both of which fit within Federer's theory of sets of positive reach and curvature measures \cite{federer_curvature_1959,schneider_convex_2014,rataj_curvature_2019}. In this note, we investigate Weyl's formula when the volume is replaced by the nonlocal perimeter. In particular, we address the leading-order behaviour of the fractional perimeter of a shrinking tube and the structure of its full expansion in the tubular radius for zero-dimensional submanifolds.

Fractional perimeters, introduced in \cite{caffarelli_nonlocal_2010}, arise in geometric variational problems and phase transition models involving long-range interactions. The nonlocal perimeter has seen consider interest over the last twenty year and has been developed in several directions including isoperimetric-type inequalities such as in \cite{cabre_fractional_2023,figalli_isoperimetry_2015,brasco_fractional_2014,frank_hardy-lieb-thirring_2008}, and the regularity theory for critical/stable/minimising sets, see \cite{dipierro_nonlocal_2018} for a survey on known results. The nonlocal perimeter also gives a natural interpolation between volume and the classical perimeter both of which can be recovered in the limit as the fractional parameter approaches either endpoint of its range of admissible values. From this viewpoint, the nonlocal perimeter can be viewed as a generalisation of both the volume and the perimeter. 

\begin{figure}[htbp]
    \centering 
    \includegraphics[width=0.45\textwidth]{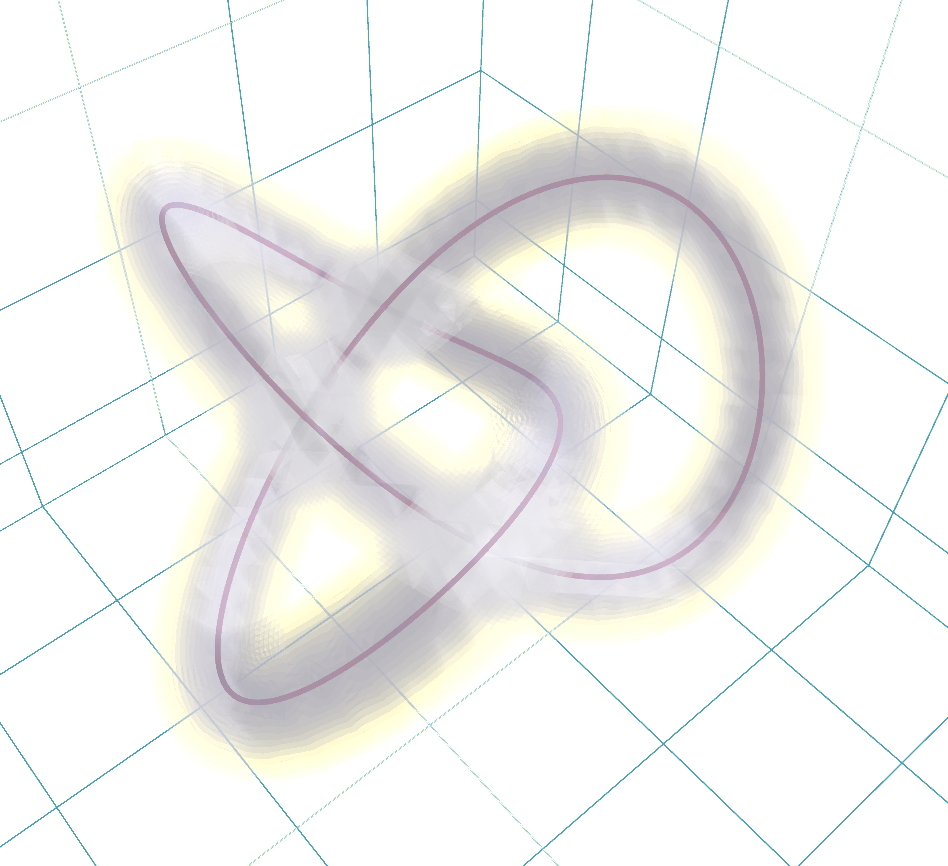} \caption{}
    \label{fig:my}
\end{figure}

For a set $E\subset\R^N$ and $\rho\geq0$, write
\begin{align*}
    E_\rho:=\{x\in\R^N \text{ s.t. } \dist(x,E)\leq\rho\}.
\end{align*}
The \emph{reach} of a closed set $E$ is the supremum of the radii for which every point at distance strictly less than that radius from $E$ has a unique nearest point in $E$. We first recall the classical formula for a closed $C^2$ submanifold $M\subset\R^N$, of dimension $m$ and codimension $n=N-m\geq1$
(see Figure~\ref{fig:my} for a sketch
of a tubolar neighborhood of a closed curve in~$\R^3$). In the notation of \cite{weyl_volume_1939,federer_curvature_1959}, it reads
\begin{align}
    \H^N(M_\rho)
    =\sum_{k=n}^N\H^k(B_1^k)\,\mathbf C_{N-k}(M)\rho^k,
    \qquad 0<\rho<\reach(M),\label{eq:Weyl}
\end{align}
where $\mathbf C_j(M)$ denotes the $j$-th total curvature. These coefficients are intrinsic since they can be expressed through the Riemannian curvature tensor of $M$, see \cite[Remark 5.21]{federer_curvature_1959}. In particular, $\mathbf C_m(M)=\H^m(M)$ and $\mathbf C_j(M)=0$ when $m-j$ is odd. Thus, we have
\begin{align}
    \lim_{\rho\to0^+}\rho^{-n}\H^N(M_\rho)
    =\H^n(B_1^n)\,\H^m(M).\label{eq:Weyl-leadingterm}
\end{align}
Differentiating the tube-volume formula and using the coarea formula gives the corresponding classical perimeter formula
\begin{align}
    \H^{N-1}(\partial M_\rho)
    =\sum_{k=n}^N\Per^k(B_1^k)\,\mathbf C_{N-k}(M)\rho^{k-1},
    \qquad 0<\rho<\reach(M),\label{eq:Weyl-perimeter}
\end{align}
where $\Per^k(B_1^k)=k\H^k(B_1^k)$. Hence, we also have
\begin{align}
    \lim_{\rho\to0^+}\rho^{1-n}\H^{N-1}(\partial M_\rho)
    =\Per^n(B_1^n)\,\H^m(M).\label{eq:Weyl-perimeter-leadingterm}
\end{align}

The goal of this paper is to understand the
nonlocal counterpart of this classical theory.
To this end,
given $s\in(0,1)$ and a measurable set $E\subset\R^N$, we define its $s$-perimeter by
\begin{align*}
    \Per_s^N(E)
    :=\gamma(N,s)\int_{\R^N\setminus E}\int_E
    \frac{\dd y\,\dd x}{\vert x-y\vert^{N+s}},
    \qquad
    \gamma(N,s):=\pi^{-N/2}\Gamma\bigg(\frac{N+s}{2}\bigg).
\end{align*}
The superscript records the ambient dimension, which will change in the leading-order limit. Our first result is the nonlocal counterpart of \eqref{eq:Weyl-leadingterm} and \eqref{eq:Weyl-perimeter-leadingterm}.

\begin{thm} \thlabel{th:main}
Let $M\subset\R^N$ be a closed $C^{1,1}$ submanifold of dimension $1\leq m\leq N-1$. If $n=N-m$ then
\begin{align}
    \lim_{\rho\to0^+}\rho^{s-n}\Per_s^N(M_\rho)
    =\Per_s^n(B_1^n)\,\H^m(M).\label{eq:thm-main}
\end{align}
\end{thm}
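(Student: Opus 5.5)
The plan is to blow up at scale $\rho$ around each point of $M$ and reduce to a product computation. Write $\Per_s^N(M_\rho)=\gamma(N,s)\int_{M_\rho}\int_{\R^N\setminus M_\rho}|x-y|^{-N-s}\,\dd y\,\dd x$. Since $M$ is compact and $C^{1,1}$, it has positive reach. For $\rho<\reach(M)$, every $x\in M_\rho$ can therefore be written uniquely as $x=p+v$ with $p\in M$, $v\in N_pM$, $|v|\le\rho$. The normal-bundle change of variables has Jacobian $J(p,v)=1+O(|v|)$, uniformly, with the error controlled by the $C^{1,1}$ norm. The inner integral $u_\rho(x):=\int_{\R^N\setminus M_\rho}|x-y|^{-N-s}\dd y$ is what we localise.

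\textbf{Step 1 (model computation).} For the flat tube $T_\rho=\R^m\times B^n_\rho$ and $x=(x',x'')$, integrating out the tangential directions gives
\[
\int_{\R^m}\frac{\dd y'}{(|y'|^2+t^2)^{(N+s)/2}}=c_{m,N,s}\,t^{-n-s},
\]
with $c_{m,N,s}=\pi^{m/2}\Gamma(\frac{n+s}{2})/\Gamma(\frac{N+s}{2})$. Hence $u^{\rm flat}_\rho(x)=c_{m,N,s}\int_{\R^n\setminus B_\rho^n}|x''-y''|^{-n-s}\dd y''$. The choice of $\gamma(N,s)$ is exactly such that $\gamma(N,s)c_{m,N,s}=\gamma(n,s)$. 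Integrating over $x''\in B^n_\rho$ and scaling gives $\gamma(N,s)\int_{B^n_\rho}u^{\rm flat}_\rho=\rho^{n-s}\Per^n_s(B^n_1)$ per unit of tangential measure. This explains the constant in \eqref{eq:thm-main}.

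\textbf{Step 2 (localisation).} Fix $\delta>0$ small, with $\delta\to0$ at the end and $\rho\ll\delta$. Split $u_\rho(x)$ into the contribution from $|y-x|\ge\delta$ and from $|y-x|<\delta$.
\begin{itemize}
\item The far part is bounded by $C\delta^{-s}$. Integrated over $M_\rho$, which has volume $O(\rho^n)$, it gives $O(\rho^n\delta^{-s})=o(\rho^{n-s})$.
\item For the near part at $x=p+v$, write $M$ in $B_{2\delta}(p)$ as a graph over $T_pM$ of a $C^{1,1}$ map with $|\nabla\text{graph}|\le C\delta$. Compare $M_\rho\cap B_\delta(x)$ with the flat tube $(p+T_pM+B^n_\rho)\cap B_\delta(x)$.
\end{itemize}

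\textbf{Step 3 (main obstacle: error in the near part).} The symmetric difference of the true and flat tubes in $B_r(x)$ lies in a shell around $\partial T_\rho$. Tangentially, at distance $r$ from $p$, its normal thickness is $O(r^2+\rho r)$ (curvature plus tilt of the normal spaces), within the relevant region $\{\dist(\cdot,M)\approx\rho\}$.

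The difficulty is that the kernel is singular near $\partial M_\rho$. So I would not bound the difference of inner integrals pointwise. Instead I would work with the double integral over pairs, using $|\chi_A(x)\chi_{A^c}(y)-\chi_B(x)\chi_{B^c}(y)|\le\chi_{A\triangle B}(x)+\chi_{A\triangle B}(y)$. Restricted to $|x-y|<\delta$, the error is then bounded by $2\int_{(A\triangle B)}\int_{\{|x-y|<\delta\}\cap\text{relevant set}}|x-y|^{-N-s}$, which still diverges. So the thickness must be exploited more cleverly.

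Alternative to try first: at scale $\rho$, rescale by $\rho$ around $p$. Then $\rho^{-1}(M_\rho-p)$ converges to $T_pM\times B_1^n$ locally in $C^{1,1}$. The $s$-perimeter localized to balls of radius $R$ is continuous under $C^1$-convergence of boundaries with uniform $C^{1,1}$ bounds, since $s<1$ makes $\int|x-y|^{-N-s}$ integrable across a $C^{1,1}$ boundary via the estimate $\int_{E}\int_{E^c\cap B_R}\lesssim\Per(E\cap B_{2R})R^{1-s}$. Applying dominated convergence in $p\in M$ (via a partition of unity on charts) then gives
\[
\rho^{s-n}\,\Per_s^N(M_\rho)\to\int_M \Per^n_s(B^n_1)\,\dd\H^m,
\]
with tails controlled as in Step 2. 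The uniform $C^{1,1}$ bound on $\partial M_\rho/\rho$ holds because $\rho<\reach$ forces $\partial M_\rho$ to have curvature at most $1/\rho$, which becomes $1$ after rescaling.
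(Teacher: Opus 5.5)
Your setup (normal-bundle coordinates for the points of $M_\rho$ with Jacobian $1+O(\vert v\vert)$, and the model computation of Step~1) is correct, and the blow-up route you end with can be made to work. As written, however, the step that carries the whole proof, ``applying dominated convergence in $p\in M$'', is unsupported: you never say which function of $p$ converges, nor what dominates it uniformly in $\rho$. If it is meant to be the $s$-perimeter of $\rho^{-1}(M_\rho-p)$ localised to $B_R$, that is not the quantity your own change of variables produces. A partition of unity on fixed charts does not by itself reassemble these localised perimeters into $\Per_s^N(M_\rho)$, because they live at scale $\rho R$; you would need an averaging argument with boundary errors and a further limit $R\to\infty$. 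The inequality you cite for the continuity claim is also false as stated: for $E=\R^N\setminus\overline{B_{2R}}$ its right-hand side vanishes while its left-hand side is positive. Uniform control of the boundary layer of the tube is exactly what you flagged as the main obstacle in Step~3, and it is what the paper spends \thref{HmhFO2hM} on: the bound \eqref{7qouPfas} is the majorant that makes dominated convergence legitimate there.

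The missing idea is an elementary majorant. With it, the $\delta$-splitting of Step~2, the comparison with the flat tube and the $C^{1,1}$ convergence of boundaries all become unnecessary. With $E^p_\rho:=\rho^{-1}(M_\rho-p)$ and $v=\rho w$, your change of variables gives
\begin{align*}
\rho^{s-n}\Per_s^N(M_\rho)=\gamma(N,s)\int_M\int_{N_pM\cap B_1}\Phi_\rho(p,w)\,J(p,\rho w)\dd w\dd\H^m_p,\qquad\Phi_\rho(p,w):=\int_{\R^N\setminus E^p_\rho}\frac{\dd z}{\vert w-z\vert^{N+s}}.
\end{align*}
Since $\dist(p+\rho w,M)\le\rho\vert w\vert$, the ball $B_{\rho(1-\vert w\vert)}(p+\rho w)$ lies in $M_\rho$. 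Hence $\Phi_\rho(p,w)\le s^{-1}\H^{N-1}(\partial B_1)(1-\vert w\vert)^{-s}$, which is integrable over the unit normal ball because $s<1$; the inner integral blows up only like $\dist(\cdot,\R^N\setminus M_\rho)^{-s}$. The graph chart with $u(y)=O(\vert y\vert^2)$ gives $\rho^{-1}\dist(p+\rho z,M)\to\dist(z,T_pM)$ for every $z$, so $\chi_{E^p_\rho}\to\chi_{\{\dist(\cdot,T_pM)\le1\}}$ off a null set. Dominated convergence in $z$, with majorant $\vert w-z\vert^{-N-s}\chi_{\{\vert w-z\vert\ge1-\vert w\vert\}}$, together with Step~1, then gives $\Phi_\rho(p,w)\to\frac{\gamma(n,s)}{\gamma(N,s)}\int_{\R^n\setminus B^n_1}\vert w-\zeta\vert^{-n-s}\dd\zeta$, identifying $N_pM$ with $\R^n$. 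A second application in $(p,w)$, with majorant $C(1-\vert w\vert)^{-s}$ and using that $J$ is bounded with $J(p,\rho w)\to1$, yields the theorem. Completed this way, your argument is genuinely different from the paper's and lighter. The paper puts both $M_\rho$ and $M_R\setminus M_\rho$ in normal coordinates and extracts its majorant from the Lipschitz bound on $z\mapsto(\pi_M(z),z-\pi_M(z))$, integrating out the tangential directions inside the estimate. Here only the points of $M_\rho$ need normal coordinates, and the majorant comes from an interior ball.
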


The power $\rho^{n-s}$ and the appearance of the fractional perimeter in dimension $n$ can be explained geometrically since, at small scales, the manifold~$M$ resembles its tangent plane and its tubular neighbourhood resembles a product of that plane with a ball in the $n$ normal directions. Integrating the fractional kernel along the $m$ tangential directions produces precisely the fractional kernel in the normal space. The choice of the normalisation constant \(\gamma(N,s)\) plays an important role at this stage since it guarantees that new normalisation constant obtained for fractional kernel in the normal space depends only on \(n\) and \(s\), not \(m\). It is interesting to find that, although fractional perimeter is nonlocal, its leading-order tube asymptotics depend only on the Hausdorff measure of $M$, which is, instead, a local quantity. 

This is interesting, since
it suggests the idea that tubes tend
to reconstruct the local properties of the set
out of the fractional perimeter,
the nonlocal contribution
remaining only in the normalizing constant
in~\eqref{eq:thm-main}.

In fact,
one possible motivation for studying the limit
in~\eqref{eq:thm-main} is the possibility of obtaining a nonlocal mass that generalises the usual nonlocal perimeter to manifolds of higher-codimensional. Several notions of nonlocal measure or mass in higher codimension have recently been proposed \cite{cicalese_notion_2025,caselli_nonlocal_2024,mihaila_definition_2025}; see also \cite[Section 5]{serra_nonlocal_2024}. Theorem~\ref{th:main}, however, shows that the particular leading-order renormalisation considered here yields only a multiple of Hausdorff measure.

The constant in \eqref{eq:thm-main} also has the expected endpoint behaviour. For a bounded set $E\subset\R^d$ with sufficiently smooth boundary, the standard limits with our normalisation are
\begin{align*}
    \lim_{s\to0^+}s\Per_s^N(E)=2\H^N(E),
    \qquad
    \lim_{s\to1^-}(1-s)\Per_s^N(E)
    =\pi^{-1/2}\H^{N-1}(\partial E);
\end{align*}
see \cite{dipierro_asymptotics_2013,caffarelli_regularity_2013,ambrosio_gamma-convergence_2011}. Formally taking the limit \(s\to 0^+\) and \(s\to 1^-\) with \(E=M_\rho\) in \thref{th:main}, we recover~\eqref{eq:Weyl-leadingterm} and~\eqref{eq:Weyl-perimeter-leadingterm} respectively.\newline


Yet, beyond the leading term, the nonlocal picture can be substantially different from Weyl's classical formula. In particular, the full expansion need not be a finite polynomial in \(\rho\), different types of powers of \(\rho\) may occur in the same expansion, and the coefficients need not be determined by intrinsic geometric invariants of the underlying manifold. These phenomena are already visible for the simplest submanifolds, namely a set of finitely many points. In the following result, we establish the full nonlocal Weyl tube formula for zero dimensional compact manifolds, i.e. a finite set of disjoint points.

\begin{thm}\thlabel{cor:pointsets}
Let $P\geq2$ be an integer and let $M=\{a_1,\dots,a_P\}$ consist of pairwise distinct points of $\R^N$. Then, for $0<\rho<\frac12\min_{i\neq j}\vert a_i-a_j\vert$,
\begin{align}\label{eq:cor:pointsets}
    \Per_s^N(M_\rho)
    ={}&\Per_s^N(B_1^N)\H^0(M)\rho^{N-s}\\
    &-\gamma(N,s)\big(\H^N(B_1^N)\big)^2\rho^{2N}
    \sum_{\substack{1\leq i,j\leq P\\i\neq j}}
    \vert a_i-a_j\vert^{-(N+s)}
    {}_3F_2\!\left(\begin{matrix}
    \frac{N+s}{2},\ 1+\frac{s}{2},\ \frac{N+1}{2}\\
    \frac N2+1,\ N+1
    \end{matrix};\frac{4\rho^2}{\vert a_i-a_j\vert^2}\right).\nonumber
\end{align}
Here ${}_3F_2$ denotes the generalised hypergeometric function.
\end{thm}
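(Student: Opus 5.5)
We decompose the interaction integral using the disjointness of the balls. Write $B^i:=B_\rho(a_i)$, so that $M_\rho=\bigcup_i B^i$, the union being disjoint since $\rho<\frac12\min_{i\neq j}|a_i-a_j|$. For $x\notin M_\rho$ we have $\mathbf 1_{\R^N\setminus M_\rho}=\mathbf 1_{\R^N\setminus B^i}-\sum_{j\neq i}\mathbf 1_{B^j}$ on $\R^N\setminus B^i$. Hence
\begin{align*}
\Per_s^N(M_\rho)=\sum_{i}\Per_s^N(B^i)-\gamma(N,s)\sum_{i\neq j}\int_{B^i}\int_{B^j}\frac{\dd y\,\dd x}{|x-y|^{N+s}}.
\end{align*}
By translation invariance and the scaling $\Per_s^N(\lambda E)=\lambda^{N-s}\Per_s^N(E)$, the first sum equals $P\,\Per_s^N(B_1^N)\rho^{N-s}=\Per_s^N(B_1^N)\H^0(M)\rho^{N-s}$. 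It therefore remains to show that, for $d=|a_i-a_j|>2\rho$,
\begin{align*}
I(d):=\int_{B_\rho(0)}\int_{B_\rho(de_1)}\frac{\dd y\,\dd x}{|x-y|^{N+s}}
=\big(\H^N(B_1^N)\big)^2\rho^{2N}d^{-(N+s)}\,{}_3F_2\!\left(\begin{matrix}\frac{N+s}{2},\,1+\frac s2,\,\frac{N+1}{2}\\ \frac N2+1,\,N+1\end{matrix};\frac{4\rho^2}{d^2}\right).
\end{align*}

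To compute $I(d)$, substitute $x=\rho u$ and $y=de_1+\rho v$, and set $w=u-v\in B_2$. Then
\begin{align*}
I(d)=\rho^{2N}\int_{B_2} g(w)\,|de_1+\rho w|^{-(N+s)}\,\dd w,
\end{align*}
where $g=\mathbf 1_{B_1}*\mathbf 1_{B_1}$ is radial. Since $|\rho w|<d$, we expand $|de_1+\rho w|^{-(N+s)}$ using Gegenbauer polynomials,
\begin{align*}
|de_1+z|^{-2\lambda}=\sum_k |z|^k d^{-2\lambda-k}(-1)^kC_k^{\lambda}(\cos\theta),\qquad \lambda=\frac{N+s}{2}.
\end{align*}
Averaging over spheres reduces this to the spherical mean of $C_k^\lambda$. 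Alternatively, and more cleanly, we use the mean-value-type identity
\begin{align*}
\fint_{\Sph^{N-1}}|de_1+r\sigma|^{-2\lambda}\,\dd\sigma=d^{-2\lambda}\,{}_2F_1\!\left(\lambda,\lambda-\tfrac N2+1;\tfrac N2;\tfrac{r^2}{d^2}\right),
\end{align*}
which is standard and can be checked by expanding the binomial series in $2dr\cos\theta+r^2$. Here $\lambda-\frac N2+1=1+\frac s2$, which matches the second numerator parameter. This gives
\begin{align*}
I(d)=\rho^{2N}d^{-(N+s)}\sum_k\frac{(\lambda)_k(1+\frac s2)_k}{(\frac N2)_k\,k!}\,\frac{\rho^{2k}}{d^{2k}}\int_{B_2}g(w)|w|^{2k}\,\dd w,
\end{align*}
with uniform convergence because $4\rho^2/d^2<1$.

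The main obstacle is computing the moments $\int g(w)|w|^{2k}\,\dd w=\int_{B_1}\int_{B_1}|u-v|^{2k}\,\dd u\,\dd v$ in closed form. Their ratio to $(\H^N(B_1^N))^2 4^k$ must be
\begin{align*}
\frac{(\frac{N+1}{2})_k(\frac N2)_k}{(\frac N2+1)_k(N+1)_k},
\end{align*}
so that, after cancelling $(\frac N2)_k$, the ${}_3F_2$ with the stated parameters appears. I would compute this moment either via the known formula for $g$ (a regularised incomplete beta function in $|w|^2/4$) followed by Beta integrals, or via the Fourier side, $\hat g=|\hat{\mathbf 1}_{B_1}|^2$ with Bessel functions, together with a Weber–Schafheitlin-type integral. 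As a sanity check, I would verify the case $k=1$: $\int\int|u-v|^2=2\H^N(B_1)^2\frac{N}{N+2}$, which should match $4\frac{(N+1)/2\cdot N/2}{(N/2+1)(N+1)}=\frac{2N}{N+2}$. It does, so the plan is consistent. Collecting the terms and summing over ordered pairs $i\neq j$ then yields \eqref{eq:cor:pointsets}.
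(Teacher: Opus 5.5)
The overall plan is correct, but the proof stops short at its central step. Your first stages coincide with the paper's proof of \thref{prop:interaction-balls}:
\begin{itemize}
\item the splitting into $\sum_i\Per_s^N(B^i)$ minus the pairwise interactions;
\item the rewriting of each interaction as $\rho^{2N}\int_{B_2}g(w)\vert de_1+\rho w\vert^{-(N+s)}\dd w$ with $g(w)=\vert B_1\cap B_1(w)\vert$;
\item the spherical mean ${}_2F_1\big(\lambda,\lambda-\frac N2+1;\frac N2;\frac{r^2}{d^2}\big)$, which is the paper's $\mathcal A(r)$.
\end{itemize}
The gap is the step you call the main obstacle. The moment identity
\begin{align*}
\int_{B_1}\int_{B_1}\vert u-v\vert^{2k}\dd u\dd v=4^k\big(\H^N(B_1)\big)^2\frac{(\frac{N+1}2)_k(\frac N2)_k}{(\frac N2+1)_k(N+1)_k}
\end{align*}
is never proved. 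You read it off from what the target ${}_3F_2$ requires and checked only $k=1$, which is circular. This identity is exactly where the parameters $\frac{N+1}2$, $\frac N2+1$ and $N+1$ come from, so without it you have shown nothing beyond the ${}_2F_1$ stage.

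The identity is true, and your first suggested route proves it with Beta integrals; no Fourier analysis is needed. By \eqref{eq:volume}, for $0<r<2$,
\begin{align*}
g(re_1)=2\H^{N-1}(B_1^{N-1})\int_{r/2}^1(1-t^2)^{\frac{N-1}2}\dd t .
\end{align*}
Fubini and the substitution $t\mapsto t^{1/2}$ then give
\begin{align*}
\int_{B_2}g(w)\vert w\vert^{2k}\dd w
&=2\H^{N-1}(\partial B_1)\H^{N-1}(B_1^{N-1})\int_0^1(1-t^2)^{\frac{N-1}2}\int_0^{2t}r^{N+2k-1}\dd r\dd t\\
&=\frac{2^{N+2k}N\H^N(B_1)\H^{N-1}(B_1^{N-1})}{N+2k}\,\mathrm B\Big(\frac{N+1}2+k,\frac{N+1}2\Big).
\end{align*}
To finish, use three facts:
\begin{itemize}
\item $\mathrm B\big(\frac{N+1}2+k,\frac{N+1}2\big)=\frac{(\frac{N+1}2)_k\Gamma(\frac{N+1}2)^2}{(N+1)_k\Gamma(N+1)}$;
\item the normalisation \eqref{eq:Gamma-calc};
\item $\frac N{N+2k}=\frac{(N/2)_k}{(N/2+1)_k}$.
\end{itemize}
Together these give the claimed moment. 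Your termwise summation is legitimate by Tonelli, since all coefficients and $g$ are nonnegative, and it then yields the theorem. This radial integral is the same one the paper evaluates for the coefficients $c_\alpha$ in the proof of \thref{CC0og8h1}.

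A smaller point concerns the spherical mean. Expanding the binomial series in $2dr\cos\theta+r^2$, as you suggest, does not immediately give the stated form: regrouping by powers of $q=r/d$ requires a resummation. The literal expansion also converges only for small $q$. A cleaner expansion in $\frac{2q\cos\theta}{1+q^2}$ gives $(1+q^2)^{-\lambda}{}_2F_1\big(\frac\lambda2,\frac{\lambda+1}2;\frac N2;\frac{4q^2}{(1+q^2)^2}\big)$, and Kummer's quadratic transformation is still needed to reach your ${}_2F_1$. Citing the identity as classical is fine, but it is not a bare binomial check.

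Once the gap is filled, your route differs from the paper's only in the last step.
\begin{itemize}
\item The paper keeps the ${}_2F_1$ intact and swaps the $(r,t)$ integrations. It then uses DLMF 15.6.8 to raise $\frac N2$ to $\frac N2+1$, and the Euler-type integral 16.5.2 to produce the ${}_3F_2$.
\item You expand first and replace both hypergeometric integral identities by one elementary Beta integral.
\end{itemize}
Your version also makes the structure transparent: $\frac{N+1}2$ and $N+1$ come from the lens volume, and $\frac N2+1$ comes from cancelling $(\frac N2)_k$ against $\frac N{N+2k}$.
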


The hypergeometric series is convergent under the stated restriction on $\rho$ and its two leading order terms are given by
\begin{align*}
    \Per_s^N(M_\rho)
    =P\Per_s^N(B_1^N)\rho^{N-s}
    -\gamma(N,s)\big(\H^N(B_1^N)\big)^2
    \bigg(\sum_{\substack{1\leq i,j\leq P\\i\neq j}}\frac 1{\vert a_i-a_j\vert^{N+s}} \bigg)\rho^{2N}
    +O(\rho^{2N+2})
\end{align*}
as $\rho\to0^+$. We see that the expansion has a noninteger leading power $N-s$ and integer powers, in contrast with the finite polynomial appearing in the classical tube formula. Furthermore, from the definition of the hypergeometric function~\eqref{eq:hypergeometric}, we see that the Maclaurin series of~\eqref{eq:cor:pointsets} gives infinitely many terms with integer powers \(2N+2k\), with~\(k\in\{ 0,1,2,\dots\}\).

Observe that the first term in~\eqref{eq:cor:pointsets} depends only on the number of points \(P\)
(that is~$\H^0(M)$). This term can be viewed as the direct nonlocal analogue of the classical tube formula for a point set since is it exactly the \(s\)-perimeter of a \(\rho\)-tubular neighbourhood (i.e. a ball of radius \(\rho\)) of each individual point summed over the set of points without taking into account any nonlocal interactions. The remaining term in~\eqref{eq:cor:pointsets} involves their pairwise distances and serves as a nonlocal interaction term. This term is truely nonlocal in the sense that it does not appear in the classical tube formula of a point set. For a single point, by comparison, there is no interaction term and $\Per_s^N(M_\rho)=\Per_s^N(B_1^N)\rho^{N-s}$ for all $\rho>0$.

This examples makes it clear that the coefficients in the nonlocal tube formula need not be geometric invariants. Indeed, any two finite zero-dimensional Riemannian manifolds with the same number of points are intrinsically isometric, while their embeddings in $\R^N$ can have different pairwise distances. Thus,
differently from the classical case,
the interaction coefficients therefore are not \emph{intrinsic} geometric invariants, even though pairwise distances are, of course, invariants under ambient Euclidean isometries. In Section~\ref{sec:points}, we also establish an expansion of the point-set interaction for general analytic kernels, see \thref{CC0og8h1}).

\subsection*{Organisation of paper}

In Section~\ref{sec:proof-main}, we give the proof of the main theorem, \thref{th:main}. In Section~\ref{sec:points}, we prove \thref{cor:pointsets} as well as a full tube formula for nonlocal perimeters with general kernels, \thref{CC0og8h1}.

\subsection*{Acknowledgments}
The first author was supported by the Australian Research Council Future Fellowship FT230100333
{\em New perspectives on nonlocal equations}.
The third author was supported by the Australian Laureate Fellowship FL190100081 {\em Minimal
surfaces, free boundaries and partial differential equations}.


\section{Proof of Theorem \ref{th:main}} \label{sec:proof-main}

The purpose of this section is to prove \thref{th:main}. We write
\begin{align*}
    N^1M = \{(x,\nu):x\in M,\ \nu\in N_xM,\ \vert \nu\vert=1\}
\end{align*}
for the unit normal bundle and \(N_x^1M\) for its fibre over \(x\). The proof uses the following proposition, which is a formulation of the classical Weyl formula for closed \(C^{1,1}\) submanifolds that is suitable for our needs.

\begin{prop} \thlabel{4gkQqROo} Let \(M\subset \R^N\) be a closed \(C^{1,1}\) submanifold of dimension \(m\) with \(1\leq m \leq N-1\) and let \(n=N-m\). Then there exist functions \(c_i \in L^\infty(N^1M)\) (depending on \(M\)) for \(i=0,\dots,m\) such that for all non-negative measurable functions \(h:\R^N \to \R\) and all \(0<\rho < \reach(M)\), we have
\begin{align*}
    \int_{M_\rho} h(z) \dd \H^N_z &= \sum_{i=0}^m\int_0^\rho \int_M \int_{N_x^1M} h(x+t\nu) t^{n-1+i} c_i(x,\nu) \dd \H^{n-1}_\nu \dd \H^m_x \dd t .
\end{align*}
Furthermore, \(c_0=1\).
\end{prop}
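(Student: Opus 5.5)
The plan is to use the normal exponential map
\begin{align*}
    \Phi:N^1M\times(0,\rho)\to M_\rho\setminus M,\qquad \Phi(x,\nu,t)=x+t\nu,
\end{align*}
and the area formula. Since $\rho<\reach(M)$, every $z\in M_\rho\setminus M$ has a unique nearest point $\pi(z)\in M$. Writing $z=\pi(z)+t\nu$ with $t=\dist(z,M)$ and $\nu=(z-\pi(z))/t\in N^1_{\pi(z)}M$ shows that $\Phi$ is a bijection. Moreover $M$ has $\H^N$-measure zero, since $m\le N-1$. For $C^{1,1}$ manifolds the unit normal bundle $N^1M$ is only a Lipschitz manifold of dimension $N-1$; locally it is parametrised by $(x,\nu)$ via a Lipschitz orthonormal frame of the normal bundle. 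Hence $\Phi$ is Lipschitz, and the area formula gives
\begin{align*}
    \int_{M_\rho}h\,\dd\H^N=\int_0^\rho\int_{N^1M}h(x+t\nu)\,J\Phi(x,\nu,t)\,\dd\H^{N-1}_{(x,\nu)}\dd t .
\end{align*}
Injectivity of $\Phi$ means no multiplicity appears.

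The next step is to compute the Jacobian at $\H^{N-1}$-a.e. point of $N^1M$ where everything is differentiable (Rademacher). Choose an orthonormal basis $e_1,\dots,e_m$ of $T_xM$ and a frame of the normal fibre. Then:
\begin{itemize}
\item the $t$-derivative of $\Phi$ is $\nu$;
\item derivatives in the fibre directions give $t$ times the tangent vectors of the sphere $N^1_xM$;
\item derivatives along $e_j$ give $e_j+t\,\partial_{e_j}\nu$.
\end{itemize}
The tangential part of $\partial_{e_j}\nu$ is $-S_\nu e_j$, where $S_\nu$ is the shape operator, which is defined a.e. and bounded because the second fundamental form is $L^\infty$ for $C^{1,1}$. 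The normal part is absorbed by the fibre directions in the determinant. We then get
\begin{align*}
    J\Phi=t^{n-1}\det(I-tS_\nu)\,J_0(x,\nu),
\end{align*}
where $J_0$ is the Jacobian relating $\H^{N-1}$ on $N^1M$ to the product measure $\dd\H^{n-1}_\nu\,\dd\H^m_x$. It is cleaner to apply the area formula directly on $M\times \Sph^{n-1}$ through a local trivialisation, so that $J_0$ disappears. Expanding $\det(I-tS_\nu)=\sum_{i=0}^m t^i c_i(x,\nu)$ gives $c_i=(-1)^i\sigma_i(S_\nu)$, the elementary symmetric functions of the principal curvatures. These are bounded, so $c_i\in L^\infty$, and $c_0=1$. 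Finally, $\det(I-tS_\nu)>0$ for $t<\reach(M)$, so no absolute value is needed. This follows because $\reach(M)\le 1/\|S_\nu\|$, or alternatively from local injectivity and the sign at $t=0$ by continuity in $t$.

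A partition of unity over trivialising charts then yields the formula for all non-negative measurable $h$, since both sides are monotone in $h$.

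The main obstacle is regularity. With $M$ only $C^{1,1}$, the normal field is merely Lipschitz, so the shape operator exists only a.e. We must check that the area formula applies to $\Phi$ on the product parametrisation, and that the a.e. derivative of the frame gives exactly the block structure above. I would handle this by working in a chart where $M$ is a graph of a $C^{1,1}$ function and taking a Lipschitz normal frame obtained by Gram–Schmidt. I would then compute the differential at points of differentiability, where the normal-normal block contributes only lower-triangular terms that do not change the determinant.

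An alternative route avoids pointwise curvature: approximate $M$ by smooth manifolds, where the classical formula is Weyl's, and pass to the limit using uniform bounds on the second fundamental form and weak-$*$ convergence of the $c_i$. This seems messier, so the direct area-formula approach is the one I would pursue.
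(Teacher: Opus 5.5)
Your proposal is correct and takes essentially the paper's route. The paper simply invokes the change of variables in tubular coordinates and the polynomial structure of its Jacobian, citing Federer's Lemma 5.2 and Corollary 5.3. You carry out that computation by hand in a graph chart with a Lipschitz normal frame, and obtain $c_i(x,\nu)=(-1)^i\sigma_i(S_\nu)$ with $c_0=1$.

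For the positivity of $\det(I-tS_\nu)$ when $t<\reach(M)$, rely on your first argument. The bound $\|S_\nu\|\le 1/\reach(M)$ holds at every point where the $C^{1,1}$ graph function is twice differentiable: compare $M$ with the open balls $B_r(x+r\nu)$, $r=\reach(M)$, which $M$ does not meet. The alternative via injectivity and continuity in $t$ would need degree theory for Lipschitz maps to be made rigorous.
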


\thref{4gkQqROo} follows from the change-of-variables formula in tubular coordinates and the polynomial structure of its Jacobian as in \cite[Lemma 5.2 and Corollary 5.3]{federer_curvature_1959}. Next, we present the following technical result. 

\begin{prop} \thlabel{HmhFO2hM} Let \(M\subset \R^N\) be a closed \(C^{1,1}\) manifold of dimension \(m\) with \(1\leq m\leq N-1\) and let \(n=N-m\). There exist \(R,\delta>0\), depending only on \(M\), such that the following holds. Suppose
\begin{align*}
    0<\rho<\delta, \qquad x\in M, \qquad \nu\in N_x^1M, \qquad \tau\in[0,1], \qquad t\in[1,R/\rho], \qquad j\in\{0,\dots,m\}.
\end{align*}
Define
\begin{align*}
 I_j^{(\rho)}(x,\nu,t,\tau) := \int_M \int_{N_y^1M} \frac{c_j(y,\eta)}{\vert x-y+\rho t\nu-\rho\tau\eta\vert^{N+s}}\dd \H^{n-1}_\eta\dd \H^m_y .
\end{align*}
Then, for \((t,\tau)\neq(1,1)\),
\begin{align}
    \big \vert I_j^{(\rho)}(x,\nu,t,\tau)\big \vert
    \leq C\bigg(\frac{1}{\rho^{n+s}}\int_{N_x^1M}\frac{\dd \H^{n-1}_\eta}{\vert t\nu-\tau\eta\vert^{n+s}}+1\bigg), \label{7qouPfas}
\end{align}
where \(C>0\) depends only on \(m\), \(n\), \(s\), and \(M\). Furthermore, for every \((t,\tau)\neq(1,1)\),
\begin{align}
    \lim_{\rho\to0^+}\rho^{n+s}I_0^{(\rho)}(x,\nu,t,\tau)
    &=\frac{\gamma(n,s)}{\gamma(N,s)}\int_{N_x^1M}\frac{\dd \H^{n-1}_\eta}{\vert t\nu-\tau\eta\vert^{n+s}}. \label{QylvaPee}
\end{align}
\end{prop}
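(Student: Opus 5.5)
We split $M$ into a piece near $x$ and the rest. Fix $R>0$ smaller than a fraction of $\reach(M)$ and small enough that $M\cap B_{2R}(x)$ is, uniformly in $x$, the graph over $T_xM$ of a $C^{1,1}$ map $\phi_x:T_xM\supset B'_{2R}\to N_xM$ with $\phi_x(0)=0$, $D\phi_x(0)=0$, $|D\phi_x(u)|\le L|u|$. The normal spaces $N_yM$ at $y=x+u+\phi_x(u)$ are then images of $N_xM$ under linear isometries $Q_{x,u}$ with $|Q_{x,u}-\mathrm{Id}|\le L|u|$. We use $Q_{x,u}$ to pull back the fibres $N^1_yM$ to $N^1_xM$, writing $\eta=Q_{x,u}\eta'$.

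\emph{Far region.} For $|y-x|\ge R$ and $\rho<\delta\le R/4$, note that $|\rho\tau\eta|\le\rho$. The term $\rho t\nu$ is normal at $x$ and has length at most $R$. Since $R<\reach(M)$, the nearest point in $M$ to $x+\rho t\nu$ is $x$, so $|x+\rho t\nu-y|\ge\rho t$. This gives only a crude bound, so we split further. If $\rho t\le R/2$, then $|x-y+\rho t\nu-\rho\tau\eta|\ge R/4$. If $\rho t> R/2$, then $|x-y+\rho t\nu-\rho\tau\eta|\ge\rho t-\rho\ge R/4$ as well. In either case the integrand is bounded by $C\|c_j\|_\infty R^{-N-s}$, and integrating over the compact $N^1M$ gives the ``$+1$'' in \eqref{7qouPfas}. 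For the limit, this part times $\rho^{n+s}$ tends to $0$.

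\emph{Near region.} For $|y-x|<R$, parametrize $y$ by $u\in T_xM$, with area factor $J(u)=1+O(|u|)$, and rescale $u=\rho w$. The integrand denominator becomes
\[
\rho^{N+s}\big|{-w}-\rho^{-1}\phi_x(\rho w)+t\nu-\tau Q_{x,\rho w}\eta'\big|^{N+s},
\]
and $\dd\H^m_y=\rho^m J\,\dd w$. This yields the prefactor $\rho^{-n-s}$.

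The tangential component of the vector inside the norm is $w+O(\rho|w|)$, and its normal component is $t\nu-\tau\eta'+O(\rho|w|^2)+O(\rho|w|)$. The key lower bound, which we expect to be the main obstacle, is
\[
|\cdots|^2\ge c\big(|w|^2+|t\nu-\tau\eta'|^2\big)
\]
uniformly for $\rho|w|<R$ and $t\le R/\rho$. We would prove it geometrically rather than by Taylor expansion: $x+\rho t\nu$ and $y+\rho\tau\eta$ lie on normal fibres at distinct base points or at distinct normal vectors. The reach condition (tubular map bi-Lipschitz on $M_{r}$ for $r<\reach$) gives
\[
|x+\rho t\nu-y-\rho\tau\eta|\ge c\big(|x-y|+\rho|t\nu-\tau\eta|\big)
\]
when both radii are at most $r$. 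Alternatively, when $\rho t$ is not small, the far-region trick applies. We also use $|t\nu-\tau\eta|\ge c|t\nu-\tau\eta'|-C\rho|w|$ to pass between the two normal vectors.

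\emph{Conclusion.} With the lower bound, the integral over $w$ satisfies
\[
\int_{\R^m}\big(|w|^2+a^2\big)^{-(N+s)/2}\dd w = \frac{\gamma(n,s)}{\gamma(N,s)}\,a^{-(n+s)},\qquad a=|t\nu-\tau\eta'|,
\]
using $\int_{\R^m}(1+|w|^2)^{-(N+s)/2}\dd w=\pi^{m/2}\Gamma(\tfrac{n+s}{2})/\Gamma(\tfrac{N+s}{2})$, which is exactly the ratio of normalising constants. This gives \eqref{7qouPfas}, and $a>0$ for $(t,\tau)\ne(1,1)$ or $\eta'\ne\nu$.

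For \eqref{QylvaPee}, $c_0=1$. The rescaled integrand converges pointwise to $(|w|^2+|t\nu-\tau\eta'|^2)^{-(N+s)/2}$, since $J\to1$, $Q\to\mathrm{Id}$ and $\rho^{-1}\phi_x(\rho w)\to0$. The lower bound supplies an integrable dominating function. Here integrability over $\eta'\in N^1_xM$ of $|t\nu-\tau\eta'|^{-n-s}$ needs $(t,\tau)\ne(1,1)$; if $t=1$, $\tau<1$ the singularity is absent. Dominated convergence then yields \eqref{QylvaPee}.
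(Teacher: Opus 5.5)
Your proposal is correct and follows essentially the same route as the paper. Both arguments use a near/far split of $M$, a $C^{1,1}$ graph chart with Lipschitz transport of the normal fibres, and the key lower bound from the Lipschitz continuity of $z\mapsto(\pi_M(z),z-\pi_M(z))$ on a tube of radius below the reach; both then integrate $(\vert w\vert^2+a^2)^{-(N+s)/2}$ over the tangential variable and use dominated convergence for the limit. The only difference is cosmetic: you put the near/far cutoff at the same radius $R$ that bounds $\rho t$, so your far region needs the extra fact that $x$ is the nearest point of $x+\rho t\nu$, whereas the paper takes the cutoff $\varepsilon>2R$ and gets by with the triangle inequality.
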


\begin{proof}
By translating and rotating, we may assume without loss of generality that \(x=0\in M\), that \(T_0M=\R^m\times\{0_n\}\), and that \(\nu=(0,\omega)\) for some \(\omega\in\Sph^{n-1}\). Due to the regularity and compactness assumptions on \(M\), there exists \(\varepsilon=\varepsilon(M)>0\) and \(u\in C^{1,1}_c(\R^m;\R^n)\) satisfying \(u(0)=0\), \(\nabla u(0)=0\), and \(\|\nabla u\|_{C^{0,1}(\R^m;\R^n)}\leq C(M)\) such that
\begin{align}
    M\cap B_\varepsilon = \{(y,u(y))\text{ s.t. }y\in\R^m\}\cap B_\varepsilon. \label{17EZvdE8}
\end{align}
Also, there exist \(\nu_k\in C^{0,1}_c(\R^m;\R^N)\), with~\(k=1,\dots,n\), such that \(\{\nu_k(y)\}_{k=1}^n\) is an orthonormal basis for \(N_{(y,u(y))}M\) for all \(y\in U_\varepsilon\), where \(U_\varepsilon\subset\R^m\) is the preimage of \(M\cap B_\varepsilon\) under the map \(y\mapsto(y,u(y))\). These functions can be chosen so that \([\nu_k]_{C^{0,1}(\R^m;\R^N)}\leq C(M)\) and \(\nu_k(0)=e_{m+k}\). Hence, if
\begin{align}
    \iota_{(y,u(y))}(\theta)=\sum_{k=1}^n\theta_k\nu_k(y), \qquad \theta\in\Sph^{n-1}, \label{iuKhv7XF}
\end{align}
then \(\iota_0(\theta)=(0,\theta)\).

Choose \(r_0\) with \(0<r_0<\reach(M)\). After taking \(\delta>0\) smaller if necessary, we may assume that \(2\delta<r_0\) and
\begin{align*}
    [\pi_M]_{C^{0,1}(M_{2\delta})}\leq \frac{r_0}{r_0-2\delta}\leq2,
\end{align*}
where \(\pi_M\) denotes the nearest-point projection onto \(M\), see \cite[Theorem 4.8]{federer_curvature_1959}. Consider the map \(\psi:M_{2\delta}\to\R^N\times\R^N\) defined by
\begin{align*}
    \psi(z)=(\pi_M(z),z-\pi_M(z)).
\end{align*}
Then, for all \(z,\tilde z\in M_{2\delta}\),
\begin{align*}
    \vert\psi(z)-\psi(\tilde z)\vert^2
    &\leq \big([\pi_M]_{C^{0,1}(M_{2\delta})}^2+(1+[\pi_M]_{C^{0,1}(M_{2\delta})})^2\big)\vert z-\tilde z\vert^2
    \leq16\vert z-\tilde z\vert^2.
\end{align*}
In particular,
\begin{align}
    \vert\psi(z)-\psi(\tilde z)\vert\leq4\vert z-\tilde z\vert \qquad\text{for all }z,\tilde z\in M_{2\delta}. \label{1ZSP6tEk}
\end{align}

We split \(I_j^{(\rho)}\) into a local and a nonlocal part,
\begin{align*}
    I_j^{(\rho)}=J_j^{(\rho)}+\widetilde J_j^{(\rho)},
\end{align*}
where, using the local normal trivialisation above,
\begin{align*}
    J_j^{(\rho)}
    &=\int_{U_\varepsilon}\int_{\Sph^{n-1}}
    \frac{c_j((y,u(y)),\iota_{(y,u(y))}(\theta))J(y)}{\vert-(y,u(y))+\rho t(0,\omega)-\rho\tau\iota_{(y,u(y))}(\theta)\vert^{N+s}}
    \dd\H^{n-1}_\theta\dd\H^m_y,
\end{align*}
and \(\widetilde J_j^{(\rho)}\) is the corresponding integral over \(M\setminus B_\varepsilon\). Here
\begin{align*}
    J(y)=\sqrt{\det(I_m+(\nabla u)^T\nabla u)}
\end{align*}
is the Jacobian of the graph parametrisation.

We claim that, if \(0<R<\delta\), then
\begin{align}
\frac{1}{\vert-(y,u(y))+\rho t(0,\omega)-\rho\tau\iota_{(y,u(y))}(\theta)\vert^{N+s}}
\leq\frac{C}{\big(\vert y\vert^2+\rho^2\vert t\omega-\tau\theta\vert^2\big)^{\frac{N+s}{2}}}. \label{rbUDtvCD}
\end{align}
Indeed, set \(z=\rho t(0,\omega)\) and \(\tilde z=(y,u(y))+\rho\tau\iota_{(y,u(y))}(\theta)\). The assumptions imply that \(z,\tilde z\in M_{2\delta}\), and
\begin{align*}
    \psi(z)&=(0,\rho t(0,\omega)),\\
    \psi(\tilde z)&=((y,u(y)),\rho\tau\iota_{(y,u(y))}(\theta)).
\end{align*}
Thus, by~\eqref{1ZSP6tEk},
\begin{align}
    4\vert-(y,u(y))+\rho t(0,\omega)-\rho\tau\iota_{(y,u(y))}(\theta)\vert
    &\geq\sqrt{\vert(y,u(y))\vert^2+\rho^2\vert t(0,\omega)-\tau\iota_{(y,u(y))}(\theta)\vert^2}. \label{G2uT2ZXv}
\end{align}
Moreover, recalling~\eqref{iuKhv7XF},
\begin{align*}
    t(0,\omega)-\tau\iota_{(y,u(y))}(\theta)
    &=(0,t\omega-\tau\theta)+\tau\sum_{k=1}^n\theta_k(e_{m+k}-\nu_k(y)).
\end{align*}
Consequently, using the Lipschitz bounds on \(\nu_k\) and Young's inequality,
\begin{align*}
    \vert t(0,\omega)-\tau\iota_{(y,u(y))}(\theta)\vert^2
    &\geq\frac12\vert t\omega-\tau\theta\vert^2-C\vert y\vert^2.
\end{align*}
Hence, after taking \(\delta\) smaller if necessary,
\begin{align}
\vert y\vert^2+\rho^2\vert t(0,\omega)-\tau\iota_{(y,u(y))}(\theta)\vert^2
&\geq C\big(\vert y\vert^2+\rho^2\vert t\omega-\tau\theta\vert^2\big). \label{IJkD3qGG}
\end{align}
Combining~\eqref{G2uT2ZXv} and~\eqref{IJkD3qGG} gives~\eqref{rbUDtvCD}.

Using~\eqref{rbUDtvCD}, the boundedness of \(c_j\), and the bound \(\vert J\vert\leq C(M)\), we obtain
\begin{align*}
    \vert J_j^{(\rho)}\vert
    &\leq C\int_{\Sph^{n-1}}\int_{\R^m}\frac{\dd\H^m_y\dd\H^{n-1}_\theta}{\big(\vert y\vert^2+\rho^2\vert t\omega-\tau\theta\vert^2\big)^{\frac{N+s}{2}}}\\
    &\leq \frac{C}{\rho^{n+s}}\int_{\Sph^{n-1}}\frac{\dd\H^{n-1}_\theta}{\vert t\omega-\tau\theta\vert^{n+s}}.
\end{align*}
Since the identification \(\theta\mapsto(0,\theta)\) is an isometry from \(\Sph^{n-1}\) onto \(N_0^1M\), this is the first term on the right-hand side of~\eqref{7qouPfas}.

For \(\widetilde J_j^{(\rho)}\), choose also \(R<\varepsilon/2\) and \(\delta<\varepsilon/4\). If \(y\in M\setminus B_\varepsilon\), then
\begin{align*}
    \vert-y+\rho t(0,\omega)-\rho\tau\eta\vert\geq\vert y\vert-\rho t-\rho\tau\geq\varepsilon-R-\rho\geq\frac14\varepsilon,
\end{align*}
for every \(\eta\in N_y^1M\). Since \(M\) is compact and \(c_j\in L^\infty(N^1M)\), it follows that
\begin{align}
    \vert\widetilde J_j^{(\rho)}\vert\leq C. \label{W4jTZAat}
\end{align}
This proves~\eqref{7qouPfas}.

Finally, we prove~\eqref{QylvaPee}. Since \(c_0=1\), the change of variables \(y\mapsto\rho y\) gives
\begin{align*}
    \rho^{n+s}I_0^{(\rho)}
    &=\int_{U_\varepsilon/\rho}\int_{\Sph^{n-1}}
    \frac{J(\rho y)}{\vert-(y,\rho^{-1}u(\rho y))+t(0,\omega)-\tau\iota_{(\rho y,u(\rho y))}(\theta)\vert^{N+s}}
    \dd\H^{n-1}_\theta\dd\H^m_y+O(\rho^{n+s}).
\end{align*}
Rescaling the pointwise estimate~\eqref{rbUDtvCD}, we have
\begin{align*}
    &\frac{\vert J(\rho y)\chi_{U_\varepsilon/\rho}(y)\vert}{\vert-(y,\rho^{-1}u(\rho y))+t(0,\omega)-\tau\iota_{(\rho y,u(\rho y))}(\theta)\vert^{N+s}}\\
    &\hspace{8em}\leq\frac{C}{\big(\vert y\vert^2+\vert t\omega-\tau\theta\vert^2\big)^{\frac{N+s}{2}}}.
\end{align*}
For \((t,\tau)\neq(1,1)\), the right-hand side belongs to \(L^1(\R^m\times\Sph^{n-1})\). Moreover, for every fixed \(y\),
\begin{align*}
    \rho^{-1}u(\rho y)\to0, \qquad J(\rho y)\to1, \qquad \iota_{(\rho y,u(\rho y))}(\theta)\to(0,\theta).
\end{align*}
Therefore, the dominated convergence theorem gives
\begin{align}
    \lim_{\rho\to0^+}\rho^{n+s}I_0^{(\rho)}
    &=\int_{\Sph^{n-1}}\int_{\R^m}\frac{\dd\H^m_y\dd\H^{n-1}_\theta}{\big(\vert y\vert^2+\vert t\omega-\tau\theta\vert^2\big)^{\frac{N+s}{2}}}
    =\frac{\gamma(n,s)}{\gamma(N,s)}\int_{\Sph^{n-1}}\frac{\dd\H^{n-1}_\theta}{\vert t\omega-\tau\theta\vert^{n+s}}, \label{eq:limit-I0}
\end{align}
where in the last line we used
\begin{align*}
    \int_{\R^m}\frac{\dd\H^m_y}{(\vert y\vert^2+a^2)^{\frac{N+s}{2}}}
    =\frac{\gamma(n,s)}{\gamma(N,s)}a^{-n-s}, \qquad a>0.
\end{align*}
Identifying \(N_0^1M\) with \(\Sph^{n-1}\) proves~\eqref{QylvaPee}.
\end{proof}

We now prove \thref{th:main}. 

\begin{proof}[Proof of \thref{th:main}]
Let \(R,\delta>0\) be as in \thref{HmhFO2hM}. After possibly taking them smaller, assume that \(0<\delta<R<\reach(M)\). Then, for all \(0<\rho<\delta\),
\begin{align*}
\gamma(N,s)^{-1}\Per_s^N(M_\rho)
&=\int_{\R^N\setminus M_\rho}\int_{M_\rho}\frac{\dd y\dd x}{\vert x-y\vert^{N+s}}
=I_1(\rho)+I_2(\rho),
\end{align*}
where
\begin{align*}
I_1(\rho)&=\int_{\R^N\setminus M_R}\int_{M_\rho}\frac{\dd y\dd x}{\vert x-y\vert^{N+s}},\\
I_2(\rho)&=\int_{M_R\setminus M_\rho}\int_{M_\rho}\frac{\dd y\dd x}{\vert x-y\vert^{N+s}}.
\end{align*}

For \(I_1(\rho)\), we claim that
\begin{align}
I_1(\rho)\leq C\rho^n \qquad\text{for all }0<\rho<\delta. \label{ybvOyJ8W}
\end{align}
Indeed, \thref{4gkQqROo} with \(h=1\) gives \(\vert M_\rho\vert\leq C\rho^n\). Let \(\widetilde R>0\) be such that \(M\subset B_{\widetilde R/2}\). Then
\begin{align}
    I_1(\rho)
    &=\int_{B_{\widetilde R}\setminus M_R}\int_{M_\rho}\frac{\dd y\dd x}{\vert x-y\vert^{N+s}}
    +\int_{(\R^N\setminus M_R)\setminus B_{\widetilde R}}\int_{M_\rho}\frac{\dd y\dd x}{\vert x-y\vert^{N+s}}. \label{tJoywSjD}
\end{align}
Since \(\dist(\R^N\setminus M_R,M_\rho)=R-\rho\), the first integral is bounded by \(C\rho^n\). After taking \(\delta<\widetilde R/4\), we have \(M_\rho\subset B_{3\widetilde R/4}\), and therefore the second integral is also bounded by
\begin{align*}
    C\vert M_\rho\vert\int_{\R^N\setminus B_{\widetilde R}}\frac{\dd x}{\vert x\vert^{N+s}}\leq C\rho^n.
\end{align*}
This proves~\eqref{ybvOyJ8W}; in particular, \(\rho^{s-n}I_1(\rho)\to0\).

For \(I_2(\rho)\), by \thref{4gkQqROo},
\begin{align*}
I_2(\rho)
&=\sum_{i,j=0}^m\int_\rho^R\int_M\int_{N_x^1M}\int_0^\rho\int_M\int_{N_y^1M}
\frac{t^{n-1+i}\tau^{n-1+j}c_i(x,\nu)c_j(y,\eta)}{\vert x-y+t\nu-\tau\eta\vert^{N+s}}\\
&\hspace{15em}\dd\H^{n-1}_\eta\dd\H^m_y\dd\tau\dd\H^{n-1}_\nu\dd\H^m_x\dd t.
\end{align*}
The changes of variables \(t\mapsto\rho t\) and \(\tau\mapsto\rho\tau\) yield
\begin{align*}
I_2(\rho)=\sum_{i,j=0}^m\rho^{2n+i+j}A_{ij}^{(\rho)},
\end{align*}
where
\begin{align*}
A_{ij}^{(\rho)}
&=\int_1^{R/\rho}\int_M\int_{N_x^1M}\int_0^1 t^{n-1+i}\tau^{n-1+j}c_i(x,\nu)I_j^{(\rho)}(x,\nu,t,\tau)\\
&\hspace{16em}\dd\tau\dd\H^{n-1}_\nu\dd\H^m_x\dd t.
\end{align*}
By~\eqref{7qouPfas} and the boundedness of \(c_i\),
\begin{align*}
\vert A_{ij}^{(\rho)}\vert
&\leq C\int_1^{R/\rho}\int_M\int_{N_x^1M}\int_0^1 t^{n-1+i}\tau^{n-1+j}
\bigg(\frac1{\rho^{n+s}}\int_{N_x^1M}\frac{\dd\H^{n-1}_\eta}{\vert t\nu-\tau\eta\vert^{n+s}}+1\bigg)\\
&\hspace{16em}\dd\tau\dd\H^{n-1}_\nu\dd\H^m_x\dd t.
\end{align*}
For each \(x\in M\), the fibre \(N_x^1M\) is isometric to \(\Sph^{n-1}\). Hence, by polar coordinates,
\begin{align*}
&\int_1^{R/\rho}\int_{N_x^1M}\int_0^1\int_{N_x^1M}
\frac{t^{n-1+i}\tau^{n-1+j}}{\rho^{n+s}\vert t\nu-\tau\eta\vert^{n+s}}
\dd\H^{n-1}_\eta\dd\tau\dd\H^{n-1}_\nu\dd t\\
&\leq\frac{R^i}{\rho^{n+s+i}}
\int_{\R^n\setminus B_1^n}\int_{B_1^n}\frac{\dd\H^n_y\dd\H^n_x}{\vert x-y\vert^{n+s}}
\leq C\rho^{-n-s-i}.
\end{align*}
Moreover,
\begin{align*}
\int_1^{R/\rho}\int_{N_x^1M}\int_0^1 t^{n-1+i}\tau^{n-1+j}\dd\tau\dd\H^{n-1}_\nu\dd t\leq C\rho^{-n-i}.
\end{align*}
Therefore,
\begin{align*}
    \vert A_{ij}^{(\rho)}\vert\leq C\rho^{-n-s-i}.
\end{align*}
It follows that
\begin{align}
\rho^{s-n}I_2(\rho)
=\sum_{i,j=0}^m\rho^{n+s+i+j}A_{ij}^{(\rho)}
=\sum_{i=0}^m\rho^{n+s+i}A_{i0}^{(\rho)}+O(\rho). \label{Li7JuxU0}
\end{align}

Since \(c_0=1\),
\begin{align*}
\rho^{n+s+i}A_{i0}^{(\rho)}
&=\int_1^{+\infty}\int_M\int_{N_x^1M}\int_0^1
\rho^{n+s+i}\chi_{[1,R/\rho]}(t)c_i(x,\nu)t^{n-1+i}\tau^{n-1}I_0^{(\rho)}(x,\nu,t,\tau)\\
&\hspace{17em}\dd\tau\dd\H^{n-1}_\nu\dd\H^m_x\dd t.
\end{align*}
By~\eqref{7qouPfas}, the absolute value of the integrand is bounded by
\begin{align*}
C\bigg(&R^i t^{n-1}\tau^{n-1}\int_{N_x^1M}\frac{\dd\H^{n-1}_\eta}{\vert t\nu-\tau\eta\vert^{n+s}}
+R^{n+s+i}t^{-1-s}\tau^{n-1}\bigg).
\end{align*}
This upper bound is integrable over
\begin{align*}
\Omega=\{(x,\nu,t,\tau):x\in M,\ \nu\in N_x^1M,\ t\in[1,+\infty),\ \tau\in[0,1]\},
\end{align*}
because its first term is a constant multiple of the fractional perimeter integral of \(B_1^n\) (see the computation below) and its second term is elementary.

Furthermore, by~\eqref{QylvaPee}, for almost every \((x,\nu,t,\tau)\in\Omega\),
\begin{align*}
&\lim_{\rho\to0^+}\rho^{n+s+i}\chi_{[1,R/\rho]}(t)c_i(x,\nu)t^{n-1+i}\tau^{n-1}I_0^{(\rho)}(x,\nu,t,\tau)\\
&\qquad=\frac{\gamma(n,s)}{\gamma(N,s)}\delta_{i0}t^{n-1}\tau^{n-1}
\int_{N_x^1M}\frac{\dd\H^{n-1}_\eta}{\vert t\nu-\tau\eta\vert^{n+s}}.
\end{align*}
Therefore, sending \(\rho\to0^+\) in~\eqref{Li7JuxU0} and applying the dominated convergence theorem, we obtain
\begin{align*}
\lim_{\rho\to0^+}\rho^{s-n}I_2(\rho)
&=\frac{\gamma(n,s)}{\gamma(N,s)}\int_M\int_{N_x^1M}\int_1^{+\infty}\int_0^1\int_{N_x^1M}
\frac{t^{n-1}\tau^{n-1}}{\vert t\nu-\tau\eta\vert^{n+s}}\\
&\hspace{16em}\dd\H^{n-1}_\eta\dd\tau\dd t\dd\H^{n-1}_\nu\dd\H^m_x\\
&=\frac{\gamma(n,s)}{\gamma(N,s)}\H^m(M)
\int_{\R^n\setminus B_1^n}\int_{B_1^n}\frac{\dd\H^n_y\dd\H^n_x}{\vert x-y\vert^{n+s}}\\
&=\gamma(N,s)^{-1}\Per_s^n(B_1^n)\H^m(M).
\end{align*}
Together with \(\rho^{s-n}I_1(\rho)\to0\), this proves \thref{th:main}.
\end{proof}

\section{Weyl tube formula for point sets} \label{sec:points}

In this section, we obtain a full nonlocal tube formula for finite point sets in $\R^N$, first for the fractional kernel and then for general analytic kernels. We use the standard notation for generalised hypergeometric functions: 
\begin{align} \label{eq:hypergeometric}
    {}_pF_q\!\bigg (\begin{matrix}a_1,\ldots,a_p\\b_1,\ldots,b_q\end{matrix};z\bigg)
    :=\sum_{k=0}^\infty
    \frac{(a_1)_k\cdots(a_p)_k}{(b_1)_k\cdots(b_q)_k}\frac{z^k}{k!}, \qquad \text{for }\vert z \vert <1 
\end{align}
where \((a)_k = \frac{\Gamma(a+k)}{\Gamma(a)}\) is the Pochhammer symbol. To establish \thref{cor:pointsets}, we need two intermediate results. The first gives a spherical integration formula for a function of one coordinate.

\begin{lem} \thlabel{lem:coarea}
    Let \(N\geq2\) and \(h:\R \to \R\) be a non-negative measurable function. Then \begin{align*}
        \int_{\partial B_1} h(x_1) \dd \H^{N-1}_x &=  \frac{2^{N-2} \H^{N-1} \big (\partial B_1\big )}{\mathrm B\big (\frac{N-1}2 , \frac12 \big)}\int_0^1 h(1-2t) t^{ \frac{N-3}2   }(1-t)^{ \frac{N-3}2   }\dd t
    \end{align*} where \(\mathrm B\) denotes the Beta function.
\end{lem}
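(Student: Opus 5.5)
The plan is to reduce the spherical integral to a one-dimensional integral in the height variable $x_1$, and then substitute $x_1 = 1-2t$.

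\emph{Step 1: slice the sphere.} Parametrise $\partial B_1\setminus\{\pm e_1\}$ by
\[
x=(r,\sqrt{1-r^2}\,\omega), \qquad r\in(-1,1),\ \omega\in \Sph^{N-2}.
\]
The area element of this parametrisation is
\[
\dd\H^{N-1}_x=(1-r^2)^{\frac{N-3}{2}}\dd r\,\dd\H^{N-2}_\omega .
\]
Equivalently, this follows from the coarea formula applied to $x\mapsto x_1$ on the sphere, whose tangential gradient has modulus $\sqrt{1-x_1^2}$. Integrating out $\omega$ gives
\[
\int_{\partial B_1} h(x_1)\dd\H^{N-1}_x=\H^{N-2}(\Sph^{N-2})\int_{-1}^1 h(r)(1-r^2)^{\frac{N-3}{2}}\dd r .
\]
Since $h\ge0$ is only measurable, I would justify this by Tonelli's theorem. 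The poles $\pm e_1$ form an $\H^{N-1}$-null set and can be discarded. For $N=2$ the sphere $\Sph^0$ consists of two points, so $\H^0(\Sph^0)=2$, and the formula still holds.

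\emph{Step 2: substitute.} Put $r=1-2t$, so that $\dd r=-2\,\dd t$ and $1-r^2=4t(1-t)$. The right-hand side becomes
\[
\H^{N-2}(\Sph^{N-2})\,2^{N-3}\cdot 2\int_0^1 h(1-2t)\,t^{\frac{N-3}{2}}(1-t)^{\frac{N-3}{2}}\dd t .
\]
The power of $2$ is $2^{N-3}\cdot 2=2^{N-2}$, which already matches the statement.

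\emph{Step 3: identify the constant.} It remains to show
\[
\H^{N-2}(\Sph^{N-2})=\frac{\H^{N-1}(\partial B_1)}{\mathrm B\big(\frac{N-1}{2},\frac12\big)} .
\]
The cleanest route is to apply the Step 1 formula with $h\equiv1$. This gives
\[
\H^{N-1}(\partial B_1)=\H^{N-2}(\Sph^{N-2})\int_{-1}^1(1-r^2)^{\frac{N-3}{2}}\dd r .
\]
Substituting $r^2=u$ turns the integral into
\[
\int_0^1 u^{-1/2}(1-u)^{\frac{N-3}{2}}\dd u=\mathrm B\Big(\tfrac12,\tfrac{N-1}{2}\Big).
\]
By symmetry of the Beta function this equals $\mathrm B\big(\frac{N-1}{2},\frac12\big)$, which yields the claimed constant.

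The only step requiring any care is the Jacobian in Step 1, namely the factor $(1-r^2)^{(N-3)/2}$. I would verify it directly by counting scales: the slice at height $r$ is a sphere of radius $\sqrt{1-r^2}$, contributing $(1-r^2)^{\frac{N-2}{2}}$. The arclength along a meridian is $\dd r/\sqrt{1-r^2}$, contributing $(1-r^2)^{-1/2}$. The product is $(1-r^2)^{\frac{N-3}{2}}$, as required. Everything else is routine.
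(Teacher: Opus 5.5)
Your proof is correct and follows the paper's argument. You slice $\partial B_1$ by $x_1$; the paper does this via the coarea formula with $\vert\nabla_{\partial B_1}x_1\vert=\sqrt{1-x_1^2}$, which yields exactly your factor $(1-r^2)^{\frac{N-3}{2}}$. You then substitute $x_1=1-2t$, as the paper does. The only difference is the last step. You obtain $\H^{N-2}(\Sph^{N-2})=\H^{N-1}(\partial B_1)/\mathrm B\big(\frac{N-1}2,\frac12\big)$ by taking $h\equiv1$ in your slicing formula, whereas the paper checks it from the explicit Gamma-function formulas for sphere areas, so your version is slightly more self-contained. Note that your substitution $u=r^2$ tacitly first uses evenness to write $\int_{-1}^1=2\int_0^1$.
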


\begin{proof}
    Let \(u:\R^N\to \R\) be defined by \(u(x)=x_1\). Then, on \(\partial B_1\), \(\vert \nabla_{\partial B_1}u(x)\vert  = \sqrt{1-x_1^2}\), so the co-area formula on \(\partial B_1\) yields \begin{align*}
        \int_{\partial B_1} h(x_1) \dd \H^{N-1}_x &= \int_{-1}^1 \int_{\partial B_1 \cap \{x_1=t\} } \frac{h(x_1)}{\sqrt{1-x_1^2}} \dd \H^{N-2}_x \dd t \\
        &= \int_{-1}^1 \frac{h(t)}{\sqrt{1-t^2}}  \H^{N-2} \big (\partial B_1 \cap \{x_1=t\}  \big ) \dd t .
    \end{align*} Since \(\partial B_1 \cap \{x_1=t\}  = \{t\} \times \partial B^{N-1}_{\sqrt{1-t^2}}\), we obtain \begin{align*}
        \int_{\partial B_1} h(x_1) \dd \H^{N-1}_x &=  \H^{N-2} \big (\partial B_1^{N-1}\big )\int_{-1}^1 h(t) \big(1-t^2 \big )^{ \frac{N-3}2   }\dd t .
    \end{align*} Finally, the change of variable \(t\to 1-2t\) yields \begin{align*}
        \int_{\partial B_1} h(x_1) \dd \H^{N-1}_x &=  2^{N-2} \H^{N-2} \big (\partial B_1^{N-1}\big )\int_0^1 h(1-2t) t^{ \frac{N-3}2   }(1-t)^{ \frac{N-3}2   }\dd t \\
        &= \frac{2^{N-2} \H^{N-1} \big (\partial B_1\big )}{\mathrm B\big (\frac{N-1}2 , \frac12 \big)}\int_0^1 h(1-2t) t^{ \frac{N-3}2   }(1-t)^{ \frac{N-3}2   }\dd t
    \end{align*} as claimed, where the final equality follows from the identity \begin{align*}
        \mathrm B\bigg (\frac{N-1}2 , \frac12 \bigg) = \frac{\Gamma \big ( \frac {N-1} 2 \big )\pi^{1/2 }}{\Gamma \big ( \frac N2\big )} = \frac{2 \pi^{ N /2 }}{\Gamma \big ( \frac N2\big )} \cdot \frac{\Gamma \big ( \frac {N-1} 2 \big )}{ 2 \pi^{ \frac{N-1}2  } } = \frac{\H^{N-1} \big (\partial B_1\big )}{\H^{N-2} \big (\partial B_1^{N-1}\big )}.
    \end{align*}
\end{proof}

Our second intermediate result computes the nonlocal interaction between two disjoint balls. 

\begin{prop} \thlabel{prop:interaction-balls} Let \(\lambda > 2\) and \(\alpha \in \R\). Then \begin{align*}
    \int_{B_1}\int_{B_1(\lambda e_1)} \frac{\dd y \dd x }{\vert x-y\vert^\alpha } &= \big ( \H^N(B_1)\big )^2 \lambda^{-\alpha} \cdot  {}_3 F_2 \bigg (
\begin{matrix}
\frac \alpha 2 , \frac{\alpha-N+2}2, \frac{N+1} 2\\
\frac N 2 +1 ,N+1 
\end{matrix}; \frac 4 {\lambda^2}
\bigg ).
\end{align*}
\end{prop}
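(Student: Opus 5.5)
We reduce the double integral to a one-dimensional integral against the distribution of \(x-y\), expand the kernel in a power series, and compute the moments term by term. Write \(x-y=\lambda e_1+w\) with \(w=x'-y'\), where \(x'\in B_1(0)\) and \(y'\in B_1(0)\) are independent and uniform after normalising by \((\H^N(B_1))^2\). Then
\begin{align*}
\vert x-y\vert^{-\alpha}=\lambda^{-\alpha}\big(1+2\lambda^{-1}w_1+\lambda^{-2}\vert w\vert^2\big)^{-\alpha/2}.
\end{align*}
Since \(\vert w\vert<2<\lambda\), the quantity \(u:=2\lambda^{-1}w_1+\lambda^{-2}\vert w\vert^2\) satisfies \(\vert u\vert<1\) uniformly on compacts. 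We therefore expand in Gegenbauer form,
\begin{align*}
(1-2r\cos\phi+r^2)^{-\alpha/2}=\sum_k C_k^{(\alpha/2)}(\cos\phi)\,r^k,
\qquad r=\vert w\vert/\lambda,\quad \cos\phi=-w_1/\vert w\vert .
\end{align*}
This converges absolutely and uniformly because \(r\le r_0<1\) on the support.

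Next we average over directions. The law of \(w\) is rotation invariant, since it is the convolution of two uniform ball measures. Hence only the spherical mean of \(C_k^{(\alpha/2)}(\cos\phi)\) over \(\Sph^{N-1}\) matters. Odd \(k\) vanish, and for \(k=2j\) the mean can be computed by \thref{lem:coarea} as a Beta-type integral. This gives the classical closed form
\begin{align*}
\frac{(\alpha/2)_j\big((\alpha-N+2)/2\big)_j}{j!\,(N/2)_j}
\end{align*}
for the mean of \(C^{(\alpha/2)}_{2j}\), equivalently the coefficient of \(r^{2j}\) in the spherical average of \((1+2r\xi_1+r^2)^{-\alpha/2}\). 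I would derive it by expanding \((1+u)^{-\alpha/2}\) binomially and using the spherical moments \(\int\xi_1^{2l}\), then recognising a terminating \({}_2F_1\) summed by Gauss/Vandermonde. Alternatively, one can quote the known formula \(\frac{1}{\vert\Sph^{N-1}\vert}\int\vert e_1-r\xi\vert^{-\alpha}=
{}_2F_1(\frac\alpha2,\frac{\alpha-N+2}2;\frac N2;r^2)\).

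It remains to compute the radial moments \(\mathbb E\vert w\vert^{2j}\) for the difference of two uniform points in the unit ball. Using \(\vert w\vert^2=\vert x'\vert^2+\vert y'\vert^2-2x'\cdot y'\), or more directly the explicit density of \(\vert w\vert\) (the normalised volume of the lens \(B_1\cap B_1(w)\), which \thref{lem:coarea} again reduces to an incomplete Beta integral), one should obtain
\begin{align*}
\mathbb E\vert w\vert^{2j}=4^j\frac{(\frac N2)_j(\frac{N+1}2)_j}{(\frac N2+1)_j(N+1)_j}.
\end{align*}
Substituting into the series then gives exactly \(\sum_j \frac{(\frac\alpha2)_j(\frac{\alpha-N+2}2)_j(\frac{N+1}2)_j}{j!(\frac N2+1)_j(N+1)_j}(4/\lambda^2)^j\), since the factor \((N/2)_j\) cancels. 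Interchanging sum and integral is justified by uniform convergence.

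The main obstacle is this moment computation. The approach I would try first is to integrate over \(y'\) at fixed \(x'\), passing to polar coordinates, and to use the generating function \(\mathbb E\, e^{\zeta\cdot w}=\big(\mathbb E\, e^{\zeta\cdot x'}\big)^2\). Here \(\mathbb E\, e^{\zeta\cdot x'}={}_0F_1(;\frac N2+1;\vert\zeta\vert^2/4)\). The product of two \({}_0F_1\) functions is a known \({}_2F_3\) (Clausen/Burchnall–Chaundy), and reading off the Taylor coefficients yields the moments with the \((N+1)_j\) and \((\frac{N+1}2)_j\) factors. One check: the case \(N=1\) can be verified directly by elementary integration.
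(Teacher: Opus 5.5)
\textbf{Verdict.} Your outline is correct and every formula you state checks out, but it assembles the ${}_3F_2$ by a different route from the paper's. The spherical mean is exactly the ${}_2F_1$ of \eqref{eq:int-hypgeom}. Your moments simplify to $\mathbb E\vert w\vert^{2j}=\frac{4^jN}{N+2j}\cdot\frac{((N+1)/2)_j}{(N+1)_j}$, which gives $2N/(N+2)$ for $j=1$.

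\textbf{Comparison with the paper.} Your first reduction is the paper's: the law of $w$ has density $\vert B_1\cap B_1(z)\vert/\H^N(B_1)^2$. After that the two arguments diverge.

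The paper obtains the spherical mean in closed form via \thref{lem:coarea}, an Euler integral and a quadratic transformation. It then integrates that ${}_2F_1$ against the lens-volume weight using Tonelli and the integral identities DLMF 15.6.8 and 16.5.2, so the ${}_3F_2$ comes out of an Euler-type integral.

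You instead expand the ${}_2F_1$ term by term. This is legitimate, since $\vert w\vert^2/\lambda^2\le 4/\lambda^2<1$ gives uniform convergence. Everything then reduces to the even moments of $\vert x'-y'\vert$, and the ${}_3F_2$ appears by multiplying coefficients, with $(N/2)_j$ cancelling.

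What your route buys:
\begin{itemize}
\item It needs only terminating identities. Your binomial/Chu--Vandermonde derivation does work: with $\mu=\alpha/2$, the coefficient of $r^{2j}$ is $(-1)^j\frac{(\mu)_j}{j!}\,{}_2F_1(-j,\mu+j;\frac N2;1)=\frac{(\mu)_j(\mu-\frac N2+1)_j}{j!\,(N/2)_j}$.
\item It shows where each parameter comes from.
\item It exposes that this proposition and \thref{CC0og8h1} rest on the same computation. What you call the main obstacle is carried out in the paper's proof of \thref{CC0og8h1}: there $\int_0^2 r^{N+2k-1}\vert B_1\cap B_1(re_1)\vert\dd r$ is evaluated by Beta integrals from \eqref{eq:volume}, so no product formula is needed.
\end{itemize}
Your generating-function route also works. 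It uses ${}_0F_1(;\frac N2+1;\frac{\vert\zeta\vert^2}4)^2={}_1F_2(\frac{N+1}2;\frac N2+1,N+1;\vert\zeta\vert^2)$; the ${}_2F_3$ product formula you cite degenerates to this. Combine it with $\mathbb E(\zeta\cdot w)^{2j}=\vert\zeta\vert^{2j}\frac{(1/2)_j}{(N/2)_j}\mathbb E\vert w\vert^{2j}$. The paper's route, by contrast, delivers the closed forms directly, without coefficient bookkeeping or series interchanges.

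\textbf{Small slips to fix.}
\begin{itemize}
\item The claim $\vert u\vert<1$ on the support is false unless $\lambda>2+2\sqrt2$: for $w$ near $2e_1$ and $\lambda$ near $2$, $u$ is near $3$. So the binomial expansion of $(1+u)^{-\alpha/2}$ can only be used formally, or for small $r$, to identify coefficients. The identity on all of $0\le r<1$ then follows from your Gegenbauer series, or by analyticity in $r$. For $\alpha$ of either sign, the Gegenbauer series converges uniformly on $r\le 2/\lambda$ because $\vert C_k^{(\mu)}\vert\le(2\vert\mu\vert)_k/k!$ on $[-1,1]$.
\item Also $x-y=w-\lambda e_1$, not $\lambda e_1+w$. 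This is harmless, since $w$ and $-w$ have the same law.
\end{itemize}
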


\begin{proof}
    Here and throughout this proof, we use the notation \(\vert E \vert = \H^N(E)\). Let \begin{align*}
        I= \int_{B_1}\int_{B_1(\lambda e_1)} \frac{\dd y \dd x }{\vert x-y\vert^\alpha }.
    \end{align*} By the change of variable \(y=x+\lambda e_1-z\) in the inner integral below, we have \begin{align*}
         I= \int_{B_1}\int_{B_1(\lambda e_1)} \frac{\dd y \dd x }{\vert x-y\vert^\alpha } &=  \int_{B_1} \int_{B_1(x )} \frac{\dd z \dd x }{\vert \lambda e_1-z \vert^\alpha }. 
    \end{align*} Furthermore, using that \begin{align*}
        \{ (x, z) \in \R^N \times \R^N \text{ s.t. } x\in B_1 , z\in B_1 (x) \} = \{ (x, z) \in \R^N \times \R^N \text{ s.t. } z\in B_2 , x\in B_1\cap  B_1 (z) \},
    \end{align*} Tonelli's theorem implies \begin{align*}
        I &=  \int_{B_2} \frac{\vert B_1 \cap B_1(z)\vert }{\vert \lambda e_1-z \vert^\alpha}  \dd z . 
    \end{align*}  Since \(\vert B_1 \cap B_1(z)\vert = \vert B_1 \cap B_1( \vert z\vert e_1 )\vert  \), polar coordinates then imply \begin{align}
        I&=  \int_0^2 r^{N-1} \vert B_1 \cap B_1(r e_1 )\vert \bigg ( \int_{\partial B_1} \frac{\dd \H^{N-1}_\omega  }{\vert \lambda e_1-r \omega  \vert^\alpha}  \bigg )  \dd r \label{eq:I}
    \end{align} Let \begin{align*}
        \mathcal A(r) = \int_{\partial B_1} \frac{\dd \H^{N-1}_\omega  }{\vert \lambda e_1-r \omega  \vert^\alpha}, \qquad r \in (0,2).  
    \end{align*} We claim that \begin{align}
        \mathcal A(r) &= \H^{N-1} (\partial B_1) \lambda^{-\alpha} \cdot   {}_2 F_1 \bigg (\frac \alpha 2, \frac{\alpha-N+2}2 ;  \frac N2 ; \frac{r^2}{\lambda^2} \bigg )  . \label{eq:int-hypgeom}
    \end{align} Indeed, to prove the claim, we write \begin{align*}
        \mathcal A(r) = \int_{\partial B_1} \frac{\dd \H^{N-1}_\omega  }{\big ( \lambda ^2 + r^2 -2\lambda r \omega_1 \big )^ {\frac\alpha  2 }} = \lambda ^{-\alpha } \int_{\partial B_1} \frac{\dd \H^{N-1}_\omega  }{\big ( 1 + q^2 -2 q \omega_1 \big )^ {\frac\alpha  2 }} 
    \end{align*} where \(q=q(r) = r/\lambda \). Then, assuming \(N\geq 2\), \thref{lem:coarea} implies that \begin{align}
         \mathcal A(r)  &=\frac{2^{N-2} \H^{N-1} \big (\partial B_1\big ) \lambda^{-\alpha} }{\mathrm B\big (\frac{N-1}2 , \frac12 \big)}\int_0^1 \frac{  t^{ \frac{N-3}2   }(1-t)^{ \frac{N-3}2   }}{ \big ( (1-q)^2 +4q t \big  )^{\alpha/2}} \dd t . \label{eq:innerint}
    \end{align} Next, since \cite[Equation 15.6.1]{olver_nist_nodate} implies \begin{align}
        \int_0^1 \frac{  t^{ \frac{N-3}2   }(1-t)^{ \frac{N-3}2   }}{ \big ( (1-q)^2 +4q t \big  )^{\alpha/2}} \dd t &= (1-q)^{-\alpha} \frac{\Gamma \big ( \frac{N-1}2 \big )^2  }{\Gamma(N-1)} \cdot {}_2 F_1 \bigg ( \frac \alpha 2 , \frac{N-1}2 ; N-1 ; \frac{-4q}{(1-q)^2} \bigg )  \label{eq:Hypgeom-identity}
    \end{align} and \begin{align*}
        \frac{2^{N-2} \H^{N-1} \big (\partial B_1\big )}{\mathrm B\big (\frac{N-1}2 , \frac12 \big)} \cdot \frac{\Gamma \big ( \frac{N-1}2 \big )^2  }{\Gamma(N-1)} &= \frac{2^{N-1} \pi^{ \frac{N-1}2   } \Gamma \big ( \frac{N-1}2 \big )}{\Gamma(N-1)} = \frac{2 \pi^{N/2}}{\Gamma \big ( \frac N 2 \big  )} = \H^{N-1} (\partial B_1)
    \end{align*} where the second equality follows from the Legendre duplication formula for the Gamma function, Equations~\eqref{eq:innerint} and~\eqref{eq:Hypgeom-identity} imply \begin{align*}
        \mathcal A(r) &=\H^{N-1} (\partial B_1) \lambda^{-\alpha}(1-q)^{-\alpha} {}_2 F_1 \bigg ( \frac \alpha 2 , \frac{N-1}2 ; N-1 ; \frac{-4q}{(1-q)^2} \bigg ) . 
    \end{align*} Furthermore, by the quadratic transfomation (which follows from \cite[Equation 15.8.14]{olver_nist_nodate} and \cite[Equation 15.8.16]{olver_nist_nodate})\footnote{More explicitly, on one hand\cite[Equation 15.8.14]{olver_nist_nodate} gives \begin{align*}
        {}_2 F_1 \bigg (a,b ; 2b ; \frac{-4q}{(1-q)^2} \bigg ) = \bigg (  \frac{1-q}{1+q}\bigg )^a {}_2F_1 \bigg ( \frac a 2 , b - \frac a2 ; b + \frac12 ; \frac{-4q^2}{(1-q^2)^2} \bigg )
    \end{align*} since, if \(z=\frac{-4q}{(1-q)^2} \) then  \begin{align*}
        1-z = 1 - \bigg ( \frac{-4q}{(1-q)^2}\bigg  ) &= \bigg ( \frac{1+q}{1-q} \bigg )^2 \text{ and }\frac{z^2}{4(z-1)} = \frac{16 q^2}{(1-q)^4} \cdot  \frac{-(1-q)^2}{4(1+q)^2} = \frac{-4q^2}{(1-q^2)^2}.
    \end{align*} On the other hand, \cite[Equation 15.8.16]{olver_nist_nodate} gives \begin{align*}
        {}_2 F_1 \bigg (a,a-b+ \frac12 ; b + \frac12 ; q^2 \bigg ) &= \big (1-q^2)^{-a} {}_2 F_1 \bigg ( \frac a 2 , b - \frac a2  ; b+\frac12 ; \frac{-4q^2}{(1-q^2)^2} \bigg ). 
    \end{align*}Equating these identities yields the claimed quadratic transformation. 
} \begin{align*}
        {}_2 F_1 \bigg (a,b ; 2b ; \frac{-4q}{(1-q)^2} \bigg ) = (1-q)^{2a} {}_2 F_1 \bigg (a,a-b+ \frac12 ; b + \frac12 ; q^2 \bigg ),
    \end{align*} we obtain \begin{align*}
        \mathcal A(r) &=\H^{N-1} (\partial B_1) \lambda^{-\alpha} \cdot   {}_2 F_1 \bigg (\frac \alpha 2, \frac{\alpha-N+2}2 ;  \frac N2 ; q^2 \bigg )
    \end{align*} Recalling that \(q=r/\lambda\), we obtain the claim~\eqref{eq:int-hypgeom}. The claim also holds when \(N=1\) since \begin{align*}
        \int_{\partial B_1} \frac{\dd \H^{0}_\omega }{ \big ( 1+q^2 -2q \omega_1 \big  )^{\alpha/2}}  &= \frac1{  ( 1-q )^\alpha}+\frac1{  ( 1+q )^\alpha} = 2 \cdot  {}_2 F_1 \bigg (\frac \alpha 2, \frac{\alpha+1}2 ;  \frac 12 ; q^2 \bigg ) 
    \end{align*} by \cite[Equation 15.4.7]{olver_nist_nodate}.

    Next, we claim that \begin{align}
        \vert B_1 \cap B_1(re_1)\vert = 2 \H^{N-1} \big ( B_1^{N-1} \big ) \int_{r/2}^1  \big (1-\rho^2\big )^{\frac{N-1}2   } \dd \rho. \label{eq:volume}
    \end{align} provided \(r\in (0,2)\). Indeed, using symmetry then the coarea formula (in \(\R^N\)), we have \begin{align*}
        \vert B_1 \cap B_1(re_1)\vert &=2\vert B_1 \cap \{ x_1 > r/2 \}\vert  \\
        &= 2 \int_{r/2}^1 \H^{N-1} ( B_1 \cap \{x_1=\rho\}  ) \dd \rho \\
        &= 2 \H^{N-1} \big ( B_1^{N-1} \big ) \int_{r/2}^1  \big (1-\rho^2\big )^{\frac{N-1}2   } \dd \rho
    \end{align*} as claimed. 

In the final part of the proof we substitute~\eqref{eq:int-hypgeom} and~\eqref{eq:volume} into~\eqref{eq:I} to obtain \begin{align*}
    I &= 2  \H^{N-1} \big ( B_1^{N-1} \big ) \H^{N-1} \big ( \partial B_1 \big )    \lambda^{-\alpha}  \int_0^2\int_{r/2}^1  r^{N-1}\big (1-\rho^2\big )^{\frac{N-1}2   }   {}_2 F_1 \bigg (\frac \alpha 2, \frac{\alpha-N+2}2 ;  \frac N2 ; \frac{r^2}{\lambda^2} \bigg )  \dd \rho \dd r. 
\end{align*} Next, Tonelli's theorem implies \begin{align*}
    I &= 2 \H^{N-1} \big ( B_1^{N-1} \big ) \H^{N-1} \big ( \partial B_1 \big )   \lambda^{-\alpha}  \int_0^1 \big (1-\rho^2\big )^{\frac{N-1}2   } \int_0^{2\rho }  r^{N-1}   {}_2 F_1 \bigg (\frac \alpha 2, \frac{\alpha-N+2}2 ;  \frac N2 ; \frac{r^2}{\lambda^2} \bigg ) \dd r \dd \rho 
\end{align*} then the change of variable \(r=2\rho t^{1/2}\) yields \begin{align*}
    I &= 2^N \H^{N-1} \big ( B_1^{N-1} \big ) \H^{N-1} \big ( \partial B_1 \big )    \lambda^{-\alpha}  \int_0^1 \rho^N\big (1-\rho^2\big )^{\frac{N-1}2   }   \int_0^1  t^{ \frac N 2 -1}    {}_2 F_1 \bigg (\frac \alpha 2, \frac{\alpha-N+2}2 ;  \frac N2 ; \frac{ 4 \rho^2 t }{\lambda^2} \bigg ) \dd t \dd \rho 
\end{align*} Next, \cite[Equation 15.6.8]{olver_nist_nodate} implies \begin{align*}
     \int_0^1  t^{ \frac N 2 -1}    {}_2 F_1 \bigg (\frac \alpha 2, \frac{\alpha-N+2}2 ;  \frac N2 ; \frac{ 4 \rho^2 t }{\lambda^2} \bigg ) \dd t &= \frac2 N {}_2 F_1 \bigg ( \frac \alpha 2, \frac{\alpha-N+2}2 ; \frac N2 +1 ;    \frac{ 4 \rho^2 }{\lambda^2}\bigg ), 
\end{align*} so, also using that  \(\H^{N-1} \big ( \partial B_1 \big )= N \H^N (B_1)\), \begin{align*}
    I &= \frac{2^{N+1}  \H^{N-1} \big ( B_1^{N-1} \big ) \H^{N-1} \big ( \partial B_1 \big )  \lambda^{-\alpha}} N   \int_0^1 \rho^N\big (1-\rho^2\big )^{\frac{N-1}2   } {}_2 F_1 \bigg ( \frac \alpha 2, \frac{\alpha-N+2}2 ; \frac N2 +1 ;    \frac{ 4 \rho^2 }{\lambda^2}\bigg )  \dd \rho .
\end{align*} Finally, we apply the change of variable \(\rho \to \rho^{1/2}\) then \cite[Equation 16.5.2]{olver_nist_nodate} to conclude \begin{align*}
    I &= 2^{N}  \H^{N-1} \big ( B_1^{N-1} \big )\H^N (B_1) \lambda^{-\alpha}   \int_0^1 \rho^{\frac{N-1}2   }\big (1-\rho\big )^{\frac{N-1}2   } {}_2 F_1 \bigg ( \frac \alpha 2, \frac{\alpha-N+2}2 ; \frac N2 +1 ;    \frac{ 4 \rho }{\lambda^2}\bigg )  \dd \rho \\
    &=  \frac{  2^{N}  \H^{N-1} \big ( B_1^{N-1} \big ) \H^N (B_1) \Gamma \big (\frac{N+1} 2 \big )^2 }{\Gamma (N+1)} \cdot \lambda^{-\alpha} \cdot  {}_3 F_2 \bigg (
\begin{matrix}
\frac \alpha 2 , \frac{\alpha-N+2}2, \frac{N+1} 2\\
\frac N 2 +1 ,N+1 
\end{matrix}; \frac 4 {\lambda^2}
\bigg ) \\ 
&=\big (\H^N (B_1) \big )^2\lambda^{-\alpha} \cdot  {}_3 F_2 \bigg (
\begin{matrix}
\frac \alpha 2 , \frac{\alpha-N+2}2, \frac{N+1} 2\\
\frac N 2 +1 ,N+1 
\end{matrix}; \frac 4 {\lambda^2}
\bigg )
\end{align*} where in the final line we used that \( \H^{N-1} \big ( B_1^{N-1} \big ) = \frac{\pi^{ \frac{N-1}2  } }{\Gamma \big ( \frac{N+1}2 \big )} \) and \(\Gamma(N+1)= 2^N \pi^{-1/2}\Gamma \big ( \frac{N+1}2 \big ) \Gamma \big ( \frac N2 +1 \big )\) (by the Legendre duplication formula) to obtain \begin{align}
   2^{N}  \H^{N-1} \big ( B_1^{N-1} \big ) \cdot \frac{\Gamma \big (\frac{N+1} 2 \big )^2 }{\Gamma (N+1)} &= \frac{ \pi^{N/2} }{\Gamma \big ( \frac N2 +1\big )} = \H^N  ( B_1 ) \label{eq:Gamma-calc}
\end{align}
\end{proof}

We now prove \thref{cor:pointsets}. 

\begin{proof}[Proof of \thref{cor:pointsets}]
    For \(0<\rho<\frac 12 \min_{i\neq j}\vert a_i-a_j\vert\), the balls \(B_\rho(a_i)\) are pairwise disjoint and
\begin{align*}
    M_\rho=\bigcup_{i=1}^P\overline{B_\rho(a_i)},
\end{align*} so 
\begin{align*}
    \Per_s^N(M_\rho)
    &= \gamma(N,s) \sum_{i=1}^P \int_{\R^N\setminus M_\rho} \int_{B_\rho (a_i)} \frac{\dd y \dd x }{\vert x-y\vert^{N+s}}\\
    &=\sum_{i=1}^P\Per_s^N(B_\rho(a_i))
    -\gamma(N,s) \sum_{\stackrel{i,j=1}{i\neq j}}^P\int_{B_\rho(a_j)}\int_{B_\rho(a_i)}\frac{\dd y \dd x }{\vert x-y\vert^{N+s}}\\
    &=\Per_s^N(B_1)\H^0(M) \rho^{N-s}
    -\gamma(N,s) \sum_{\stackrel{i,j=1}{i\neq j}}^P\int_{B_\rho(a_j)}\int_{B_\rho(a_i)}\frac{\dd y \dd x }{\vert x-y\vert^{N+s}}.
\end{align*} Then, by making the changes of variables \(x\to a_j+\rho x\), \(y\to a_j+\rho y\) and using rotational symmetry, we obtain \begin{align*}
    \int_{B_\rho(a_j)}\int_{B_\rho(a_i)}\frac{\dd y \dd x }{\vert x-y\vert^{N+s}} &= \rho^{N-s}\int_{B_1}\int_{B_1\big (\rho^{-1} (a_i-a_j) \big )}\frac{\dd y \dd x }{\vert x-y\vert^{N+s}}=\rho^{N-s}\int_{B_1}\int_{B_1\big (\rho^{-1} \vert a_i-a_j\vert e_1 \big )}\frac{\dd y \dd x }{\vert x-y\vert^{N+s}}. 
\end{align*} Finally, \thref{prop:interaction-balls} implies \begin{align*}
    \int_{B_\rho(a_j)}\int_{B_\rho(a_i)}\frac{\dd y \dd x }{\vert x-y\vert^{N+s}} &=\rho^{2N}\big ( \H^N(B_1)\big )^2 \vert a_i-a_j\vert ^{-N-s} \cdot  {}_3 F_2 \bigg (
\begin{matrix}
\frac {N+s} 2 , \frac{s}2+1, \frac{N+1} 2\\
\frac N 2 +1 ,N+1 
\end{matrix}; \frac {4\rho^2} {\vert a_i-a_j\vert ^2}
\bigg )
\end{align*} completing the proof. 
\end{proof}

Next, we obtain an analogous result to \thref{cor:pointsets} for a more general class of kernels. To this end, given \(K:\R^N \to \R\) measurable, we define \begin{align*}
\Per_K(E) = \int_{\R^N\setminus E}\int_E K(x-y)\dd y \dd x
\end{align*} provided this quantity is finite. In this case, we will not be able to express the tube formula in terms of a hypergeometric function, but instead we express it as a Taylor series expansion in \(\rho\).

\begin{prop} \thlabel{CC0og8h1}
    Let \(P\geq 2\) be an integer and \(M=\{a_1,\dots,a_P\}\) for pairwise distinct points \(a_1,\dots,a_P\in\R^N\). If \(K:\R^N\to\R\) is measurable, \(\Per_K(B)\) is finite for every ball \(B\subset\R^N\), and \(K\in C^\omega(\R^N\setminus\{0\})\), then there exists \(\rho_0>0\), depending on \(M\) and \(K\), such that
    \begin{align*}
        \Per_K(M_\rho)
        &=\Per_K(B_\rho)\H^0(M)-\sum_{\stackrel{i,j=1}{i\neq j}}^P\sum_{\alpha\in\N_0^N}c_\alpha\partial^{2\alpha}K(a_j-a_i)\rho^{2N+2\vert\alpha\vert}
    \end{align*}
    for all \(0<\rho<\rho_0\), where
    \begin{align}
        c_\alpha=\big ( \H^N(B_1) \big )^2  \cdot \frac{\big ( \frac{N+1}2 \big )_{\vert \alpha \vert }  }{ (N+1)_{\vert \alpha \vert } \cdot \big ( \frac N 2 +1\big )_{\vert \alpha \vert} \cdot \alpha! } .  \label{eq:c-alpha}
    \end{align}
\end{prop}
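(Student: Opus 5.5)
We follow the proof of \thref{cor:pointsets}. For \(\rho<\frac12\min_{i\neq j}\vert a_i-a_j\vert\) the closed balls \(\overline{B_\rho(a_i)}\) are pairwise disjoint. Splitting the complement of \(M_\rho\) exactly as before, we obtain
\begin{align*}
\Per_K(M_\rho)=\sum_{i=1}^P\Per_K(B_\rho(a_i))-\sum_{i\neq j}\int_{B_\rho(a_j)}\int_{B_\rho(a_i)}K(x-y)\dd y\dd x .
\end{align*}
Translation invariance gives \(\Per_K(B_\rho(a_i))=\Per_K(B_\rho)\). Since \(\Per_K(B_\rho)\) is finite, every term in this splitting is finite, so the subtraction is legitimate. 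It therefore remains to expand each interaction integral. We substitute \(x=a_j+\xi\), \(y=a_i+\zeta\) with \(\xi,\zeta\in B_\rho\), so that \(x-y=(a_j-a_i)+(\xi-\zeta)\) and \(\vert \xi-\zeta\vert<2\rho\).

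Next we use analyticity. Since \(K\) is real-analytic on \(\R^N\setminus\{0\}\) and the finite set \(\{a_j-a_i\}_{i\neq j}\) avoids the origin, there is \(r>0\) such that the Taylor series of \(K\) at each \(a_j-a_i\) converges absolutely and uniformly on \(\overline{B_{r}}(a_j-a_i)\). We take \(\rho_0\le r/2\) (also \(\rho_0\) below the separation bound). Then for \(\rho<\rho_0\)
\begin{align*}
K(a_j-a_i+w)=\sum_{\beta\in\N_0^N}\frac{\partial^\beta K(a_j-a_i)}{\beta!}w^\beta,\qquad \vert w\vert<2\rho,
\end{align*}
and the convergence is absolute. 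Concretely, we take \(r\) so small that the series \(\sum_\beta \vert\partial^\beta K\vert\,(2r)^{\vert\beta\vert}/\beta!\) converges, using Cauchy estimates on a complex polydisc. This justifies exchanging the sum with the integral over \(B_\rho\times B_\rho\). We thus need the moments
\begin{align*}
m_\beta(\rho):=\int_{B_\rho}\int_{B_\rho}(\xi-\zeta)^\beta\dd\zeta\dd\xi=\rho^{2N+\vert\beta\vert}m_\beta(1).
\end{align*}

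The moment computation is the main step. By the symmetry \(w\mapsto -w\) and reflections in each coordinate, \(m_\beta(1)=0\) unless \(\beta=2\alpha\). One route is to mimic \thref{prop:interaction-balls}: write \(m_{2\alpha}(1)=\int_{B_2}\vert B_1\cap B_1(w)\vert\,w^{2\alpha}\dd w\). Since \(\vert B_1\cap B_1(w)\vert\) is radial, the spherical moments \(\int_{\partial B_1}\omega^{2\alpha}\dd\H^{N-1}_\omega\) factor out. Their standard values are \(\frac{2\prod_k\Gamma(\alpha_k+\frac12)}{\Gamma(\vert\alpha\vert+\frac N2)}\). The radial integral \(\int_0^2r^{N-1+2\vert\alpha\vert}\vert B_1\cap B_1(re_1)\vert\dd r\) is handled with \eqref{eq:volume}, Tonelli, and Beta integrals, as in the last steps of that proof.

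A cleaner check, which we would use to confirm the constants, is to compare with \thref{prop:interaction-balls}. Taking \(K(z)=\vert z\vert^{-\alpha'}\) and expanding both sides in powers of \(4/\lambda^2\) pins down \(m_{2\alpha}(1)/(2\alpha)!\) through the sums over \(\vert\alpha\vert=k\) of \(\partial^{2\alpha}\vert z\vert^{-\alpha'}\). That comparison only fixes combinations, though, so the direct moment computation is the real proof. The target is \(m_{2\alpha}(1)/(2\alpha)!=c_\alpha\) as in \eqref{eq:c-alpha}. Simplifying requires the duplication formula \(\Gamma(\alpha_k+\frac12)/(2\alpha_k)!=\sqrt\pi\,4^{-\alpha_k}/\alpha_k!\) together with \eqref{eq:Gamma-calc}. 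This is where the factor \(1/\alpha!\) and the Pochhammer symbols \((\frac{N+1}2)_{\vert\alpha\vert}\), \((N+1)_{\vert\alpha\vert}\), \((\frac N2+1)_{\vert\alpha\vert}\) should emerge. We expect this Gamma-function bookkeeping, including the power \(4^{\vert\alpha\vert}\) that must be absorbed into \(\rho^{2\vert\alpha\vert}\), to be the main obstacle. Substituting the moments into the expansion then yields the stated series.
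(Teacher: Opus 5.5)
Your plan follows the paper's proof step for step. It uses the same splitting over the disjoint balls, termwise integration of the Taylor series of $K$ at $a_j-a_i$, parity to kill odd moments, and the reduction of $\frac1{(2\alpha)!}\int_{B_1}\int_{B_1}(x-y)^{2\alpha}\dd y\dd x$ to a spherical moment times a radial integral handled via \eqref{eq:volume} and a Beta integral. Your Cauchy-estimate justification of absolute convergence is more explicit than the paper's. Your Gamma-function form of the spherical moments is equivalent to the double-factorial formula of Weyl that the paper quotes.

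One correction to your forecast. No factor $4^{\vert\alpha\vert}$ may be absorbed into $\rho^{2\vert\alpha\vert}$, since that would change the identity, and none needs to be. The $4^{-\vert\alpha\vert}$ coming from $\Gamma(\alpha_k+\frac12)/(2\alpha_k)!$ cancels exactly against the $2^{2\vert\alpha\vert}$ produced by integrating $r^{N+2\vert\alpha\vert-1}$ over $0<r<2t$. After that, $(N+2\vert\alpha\vert)\Gamma(\vert\alpha\vert+\frac N2)=2\Gamma(\frac N2+1)(\frac N2+1)_{\vert\alpha\vert}$, $\mathrm B(\frac{N+1}2+\vert\alpha\vert,\frac{N+1}2)=\Gamma(\frac{N+1}2)^2(\frac{N+1}2)_{\vert\alpha\vert}/\big(\Gamma(N+1)(N+1)_{\vert\alpha\vert}\big)$ and \eqref{eq:Gamma-calc} give exactly \eqref{eq:c-alpha}.
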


\begin{proof}
For every \(i\neq j\), choose \(r_{ij}>0\) such that the Taylor series of \(K\) at \(a_j-a_i\) converges absolutely and uniformly on compact subsets of \(B_{r_{ij}}(a_j-a_i)\). Choose \(\rho_0>0\) so that
\begin{align*}
    2\rho_0<\min_{i\neq j}\min\{\vert a_i-a_j\vert,r_{ij}\}.
\end{align*}
Then, for \(0<\rho<\rho_0\), the balls \(B_\rho(a_i)\) are pairwise disjoint and
\begin{align*}
    M_\rho=\bigcup_{i=1}^P\overline{B_\rho(a_i)}.
\end{align*}
Consequently,
\begin{align*}
    \Per_K(M_\rho)
    &=\sum_{i=1}^P\Per_K(B_\rho(a_i))
    -\sum_{\stackrel{i,j=1}{i\neq j}}^P\int_{B_\rho(a_j)}\int_{B_\rho(a_i)}K(x-y)\dd y\dd x\\
    &=\Per_K(B_\rho)\H^0(M)
    -\sum_{\stackrel{i,j=1}{i\neq j}}^P\int_{B_\rho(a_j)}\int_{B_\rho(a_i)}K(x-y)\dd y\dd x.
\end{align*}
For \(x\in B_\rho(a_j)\) and \(y\in B_\rho(a_i)\), we have
\begin{align*}
    x-y=(a_j-a_i)+\big(x-y-(a_j-a_i)\big),
\end{align*}
and \(\vert x-y-(a_j-a_i)\vert<2\rho<r_{ij}\). Thus the Taylor series at \(a_j-a_i\) may be integrated term by term:
\begin{align*}
\int_{B_\rho(a_j)}\int_{B_\rho(a_i)}K(x-y)\dd y\dd x
&=\sum_{\alpha\in\N_0^N}\frac{\partial^\alpha K(a_j-a_i)}{\alpha!}
\int_{B_\rho(a_j)}\int_{B_\rho(a_i)}\big(x-y-(a_j-a_i)\big)^\alpha\dd y\dd x.
\end{align*}
The changes of variables \(x=a_j+\rho X\) and \(y=a_i+\rho Y\) give
\begin{align*}
\int_{B_\rho(a_j)}\int_{B_\rho(a_i)}K(x-y)\dd y\dd x
&=\sum_{\alpha\in\N_0^N}\frac{\partial^\alpha K(a_j-a_i)}{\alpha!}\rho^{2N+\vert\alpha\vert}
\int_{B_1}\int_{B_1}(X-Y)^\alpha\dd Y\dd X.
\end{align*}
If any entry of \(\alpha\) is odd then the last integral vanishes by symmetry. Therefore,
\begin{align*}
\int_{B_\rho(a_j)}\int_{B_\rho(a_i)}K(x-y)\dd y\dd x
&=\sum_{\alpha\in\N_0^N}c_\alpha\partial^{2\alpha}K(a_j-a_i)\rho^{2N+2\vert\alpha\vert}
\end{align*} where \(c_\alpha\) is defined by \begin{align*}
     c_\alpha=\frac1{(2\alpha)!}\int_{B_1}\int_{B_1}(x-y)^{2\alpha}\dd y\dd x.
\end{align*} Finally, we show that \(c_\alpha\) has the form~\eqref{eq:c-alpha}. As at the start of the proof of~\thref{prop:interaction-balls}, we make the change of variable \(y=x-z\) in the inner integral, swap the order of integration, then use polar coordinates to obtain \begin{align*}
    c_\alpha &= \frac1{(2\alpha)!}\int_{B_1}\int_{B_1(x)} z^{2\alpha}\dd z \dd x \\
    &= \frac1{(2\alpha)!} \int_{B_2} \vert B_1 \cap B_1(z)\vert  z^{2\alpha}\dd z \\
    &= \frac1{(2\alpha)!} \bigg ( \int_0^2 r^{N+2\vert \alpha\vert  -1} \vert B_1 \cap B_1(r e_1 )\vert \dd r \bigg ) \bigg (  \int_{\partial B_1 }  \omega ^{2\alpha}\dd \H^{N-1}_\omega \bigg ) .
\end{align*} From \cite[Equation 12]{weyl_volume_1939}, we have \begin{align*}
    \int_{\partial B_1 }  \omega ^{2\alpha}\dd \H^{N-1}_\omega  &= \frac{\H^{N-1}(\partial B_1) \prod_{\alpha_k\neq 0, k=1}^N (2\alpha_k - 1)!! }{(N+2\vert \alpha \vert -2) \cdots (N+2) N}. 
\end{align*} Then, using that \((2m-1)!! = \frac{(2m)!}{2^m m!}\), we have \begin{align*}
    \prod_{\alpha_k\neq 0, k=1}^N (2\alpha_k - 1)!! = \prod_{\alpha_k\neq 0, k=1}^N \frac{(2\alpha_k )!}{2^{\alpha_k} (\alpha_k)!} =\prod_{k=1}^N \frac{(2\alpha_k )!}{2^{\alpha_k} (\alpha_k)!} = \frac{(2\alpha)!}{2^{\vert \alpha \vert }\alpha! }
\end{align*} and \begin{align*}
    (N+2\vert \alpha \vert -2) \cdots (N+2) N = \frac{2^{\vert \alpha \vert } N }{N+ 2 \vert \alpha \vert } \cdot \bigg ( \frac N2 +\vert \alpha \vert \bigg  ) \cdots \bigg ( \frac N2 +1 \bigg ) = \frac{2^{\vert \alpha \vert } N }{N+ 2 \vert \alpha \vert } \bigg ( \frac N 2 +1\bigg )_{\vert \alpha \vert},
\end{align*} so \begin{align*}
    \int_{\partial B_1 }  \omega ^{2\alpha}\dd \H^{N-1}_\omega  &= \frac{ (N+2\vert \alpha \vert) \H^{N-1}(\partial B_1) (2\alpha)!  }{2^{2\vert \alpha \vert } N  \cdot \big ( \frac N 2 +1\big )_{\vert \alpha \vert}\cdot  \alpha! }= \frac{ (N+2\vert \alpha \vert) \H^N(B_1) (2\alpha)!  }{2^{2\vert \alpha \vert }   \big ( \frac N 2 +1\big )_{\vert \alpha \vert} \cdot \alpha! } . 
\end{align*} Furthermore, from~\eqref{eq:volume}, we have \begin{align*}
   \int_0^2 r^{N+2 \vert \alpha \vert  -1} \vert B_1 \cap B_1(r e_1 )\vert \dd r &= 2 \H^{N-1}\big ( B_1^{N-1} \big ) \int_0^2 \int_{r/2}^1r^{N+2 \vert \alpha \vert  -1}  (1-t^2)^{\frac{N-1}2} \dd t \dd r \\
   &=2 \H^{N-1}\big ( B_1^{N-1} \big ) \int_0^1 \int_0^{2 t }r^{N+2 \vert \alpha \vert  -1}  (1-t^2)^{\frac{N-1}2} \dd r  \dd t \\
   &= \frac{2^{N+2\vert \alpha \vert +1} \H^{N-1}\big ( B_1^{N-1} \big ) }{N+2\vert \alpha \vert }\int_0^1 t^{N+2 \vert \alpha \vert }  (1-t^2)^{\frac{N-1}2}  \dd t .
\end{align*} Then the change of variable \(t\to t^{1/2}\) gives \begin{align*}
    \int_0^2 r^{N+2 \vert \alpha \vert  -1} \vert B_1 \cap B_1(r e_1 )\vert \dd r &= \frac{2^{N+2\vert \alpha \vert } \H^{N-1}\big ( B_1^{N-1} \big ) }{N+2\vert \alpha \vert }\int_0^1 t^{ \frac{N+2 \vert \alpha \vert-1}2  }  (1-t)^{\frac{N-1}2}  \dd t \\
    &= \frac{2^{N+2\vert \alpha \vert } \H^{N-1}\big ( B_1^{N-1} \big )  \Gamma \big ( \frac{N+2\vert \alpha \vert +1}2 \big )  \Gamma \big ( \frac{N+1}2 \big )  }{(N+2\vert \alpha \vert) \Gamma  (N+\vert \alpha \vert +1  )  }\\
    &= \frac{2^{N+2\vert \alpha \vert } \H^{N-1}\big ( B_1^{N-1} \big )  \big ( \frac{N+1}2 \big )_{\vert \alpha \vert }  \cdot  \Gamma \big ( \frac{N+1}2 \big )  ^2 }{(N+2\vert \alpha \vert) (N+1)_{\vert \alpha \vert }\cdot  \Gamma  (N +1  )  } 
\end{align*} using the identities \(\int_0^1 t^{a-1}(1-t)^{b-1}\dd t = \mathrm B(a,b) = \frac{\Gamma(a)\Gamma(b)}{\Gamma(a+b)}\) and \(\Gamma(x+k)=(x)_k \cdot \Gamma(x)\). Collecting the preceding identities, we obtain \begin{align*}
    c_\alpha &= \frac1{(2\alpha)!} \bigg ( \frac{2^{N+2\vert \alpha \vert } \H^{N-1}\big ( B_1^{N-1} \big )  \big ( \frac{N+1}2 \big )_{\vert \alpha \vert }  \cdot  \Gamma \big ( \frac{N+1}2 \big )  ^2 }{(N+2\vert \alpha \vert) (N+1)_{\vert \alpha \vert }\cdot  \Gamma  (N +1  )  }  \bigg  )\bigg ( \frac{ (N+2\vert \alpha \vert) \H^N(B_1) (2\alpha)!  }{2^{2\vert \alpha \vert }   \big ( \frac N 2 +1\big )_{\vert \alpha \vert} \cdot \alpha! }\bigg  )\\
    &=  \frac{2^{N } \H^{N-1}\big ( B_1^{N-1} \big )\H^N(B_1)     \cdot  \Gamma \big ( \frac{N+1}2 \big )  ^2 }{   \Gamma  (N +1  )  }  \cdot \frac{\big ( \frac{N+1}2 \big )_{\vert \alpha \vert }  }{ (N+1)_{\vert \alpha \vert } \cdot \big ( \frac N 2 +1\big )_{\vert \alpha \vert} \cdot \alpha! }.
\end{align*} Finally, using~\eqref{eq:Gamma-calc}, we obtain~\eqref{eq:c-alpha}, as desired.
\end{proof}

\printbibliography
\vfill
\end{document}